\documentclass[12pt]{amsart}

\usepackage{amsfonts, amssymb, amscd}

\def\dual                 {{\vee}}

\def\ee                 {{\rm e}}
\def\MSV		{{\rm MSV}}

\def\ZZ                 {{\mathbb Z}}
\def\PP                {{\mathbb P}}
\def\RR                 {{\mathbb R}}
\def\CC                 {{\mathbb C}}
\def\QQ                 {{\mathbb Q}}


\newtheorem{lemma}{Lemma}[section]
\newtheorem{theorem}[lemma]{Theorem}

\newtheorem{proposition}[lemma]{Proposition}
\theoremstyle{definition}
\newtheorem{definition}[lemma]{Definition}

\newtheorem{remark}[lemma]{Remark}
\theoremstyle{remark}
\newtheorem*{proof*}{Proof}
\numberwithin{equation}{section}
\begin{document}
\title{On CY-LG correspondence for (0,2) toric models}

\author{Lev A.  Borisov}
\address{Rutgers University, Department of Mathematics, 110 Frelinghuysen Rd.,
Piscataway \\ NJ \\ 08854 \\ USA}
\email{borisov@math.rutgers.edu}
\author{Ralph M. Kaufmann}
\address{Purdue University, Department of Mathematics, 150 N. University St.,
West Lafayette \\ IN \\47907\\USA}
\email{rkaufman@math.purdue.edu}

\begin{abstract}
We conjecture a description of the vertex (chiral) algebras of the (0,2) nonlinear sigma
models on smooth quintic threefolds. We provide evidence in favor of the conjecture
by connecting our algebras to the cohomology of a twisted chiral de Rham sheaf.
We discuss CY/LG correspondence in this setting.
\end{abstract}

\maketitle

\section{Introduction}
The goal of this paper is to show that the vertex algebra approach to 
toric mirror symmetry is suitable for working with the (0,2) theories.
Compared to their (2,2) cousins,
(0,2) nonlinear sigma models are poorly understood. There 
has been a renewed recent interest in them, see for example \cite{Guffin}.
This paper aims to provide a concrete tool for various calculations in
the theories. We focus our attention on the quintic case, but most of our techniques 
are applicable in a much wider context. 

\smallskip
Let us review the basics of the vertex algebra approach to mirror symmetry.
In the very important paper \cite{MSV} Malikov, Schechtman and Vaintrob
have constructed the so called chiral de Rham complex, which is a sheaf
of vertex (in physics literature \emph{chiral}) algebras over a given smooth
manifold $X$. Its cohomology should be viewed as the large K\"ahler limit
of  the space of states 
of the half-twisted theory for the type II string models with target $X$,
see \cite{WK}\footnote{There is an alternative interpretation of chiral de Rham 
complex in the works of Heluani and coathors, see for example \cite{Heluani}. We
thank the referee for pointing this out to us.}.

\smallskip
The chiral de Rham complex  $\MSV(X)$ is defined locally. Thus, it does not 
carry the information about instanton corrections. It is expected that
one should be able (in the simply connected case) to construct a deformation
of its cohomology that would incorporate these corrections, along the lines
of the construction of quantum cohomology. However, this construction
is not presently known. 

\smallskip
In the case when $X$ is a hypersurface in a Fano toric variety, an ad hoc
deformation has been defined in \cite{Borvert}, motivated by Batyrev's
mirror symmetry. Specifically, let $M_1$ and $N_1$ be dual lattices (in
this paper this simply means free abelian groups), and let
$\Delta$ and $\Delta^\dual$ be dual reflexive polytopes in them. Consider
extended dual lattices $M=M_1\oplus \ZZ$ and $N=N_1\oplus \ZZ$
and cones $K=\RR_{\geq 0}(\Delta,1) \cap M$ and 
$K^\dual=\RR_{\geq 0}(\Delta^\dual,1) \cap N$ in them.
Then the vertex algebras of mirror symmetry are defined  in  \cite{Borvert}
as the cohomology of the lattice vertex algebra ${\rm Fock}_{M\oplus N}$ 
by the differential 
$$
D_{f,g}={\rm Res}_{z=0}\Big(\sum_{m\in\Delta} f_m m^{ferm}(z)\ee^{\int m^{bos}(z)}
+\sum_{n\in\Delta^\dual} g_n n^{ferm}(z)\ee^{\int n^{bos}(z)}\Big)
$$
where $f_m$ and $g_n$ are complex parameters.
This construction may be extended to a more general setting of Gorenstein
dual cones. The resulting algebras have numerous nice properties, studied in
\cite{chiralrings}. In particular, they admit $N=2$ structures and their 
chiral rings can be calculated. This approach is somewhat different from the gauged linear sigma model approach of \cite{Witten} since it is based on the classical description of toric varieties in terms of their fans, as opposed to the homogeneous coordinate ring construction
of Cox.
 
 \smallskip
This paper is dealing with  a certain generalization the theory known 
as (0,2) nonlinear sigma model. One major difference is that 
the tangent bundle $TX$ is replaced 
by another vector bundle $E$ with the same first and second
Chern classes. The influential paper of Witten
\cite{Witten} describes such theories for the case of the hypersurfaces in
the projective space. In this paper we will specifically focus on the quintic threefolds
in $\PP^4$,
although our techniques are valid in any dimension.

\smallskip
As in \cite[(6.39-40)]{Witten}, we consider a homogeneous polynomial $G$ of 
degree $5$ in the homogeneous coordinates $x_i$ on $\PP^4$ 
and five polynomials $G^i$ of degree four in these coordinates with the property
$\sum_i x_iG^i=0$. Equivalently, we consider 
five polynomials of degree four $R^i=\partial_iG+G^i$.
Witten has constructed  (physically)
a one-dimensional family of (0,2) theories that interpolates
between the Calabi-Yau and the Landau-Ginzburg phases.
The Calabi-Yau theory in question is defined
by the quintic $G=0$, but with a vector bundle that is a deformation of the tangent 
bundle, given by $G^i$. 
We argue that the half-twisted theories for these data are given
by the cohomology 
of the lattice vertex algebra ${\rm Fock}_{M\oplus N}$
by the differential
$$
D_{(F^\cdot),g}={\rm Res}_{z=0}\Big(\sum_{\stackrel{m\in \Delta}{0\leq i\leq 4}}F^i_m m_i^{ferm}(z)\ee^{\int m^{bos}(z)}
+\sum_{n\in\Delta^\dual} g_n n^{ferm}(z)\ee^{\int  n^{bos}(z)}\Big)
$$
where $F^i=x_iR^i$ are degree $5$ polynomials that generalize the logarithmic
derivatives of the equation of the quintic (see Section \ref{sechet} for details).
Equivalently, one can take the cohomology 
of  ${\rm Fock}_{M\oplus K^\dual}$ by the above differential $D_{(F^\cdot),g}$.
We denote these vertex algebras by $V_{(F^\cdot),g}$. In the case when $F^i=x_i\partial_if$
are logarithmic derivatives of some degree five polynomial $f$, we have 
$V_{(F^\cdot),g}=V_{f,g}$, i.e. these algebras generalize the usual vertex algebras of mirror 
symmetry.

\smallskip
We consider a natural "limit" of the algebras $V_{(F^\cdot),g}$ for fixed $F^i$, given by 
the cohomology of the so-called partial (\emph{deformed} in \cite{Borvert})
lattice vertex algebra ${\rm Fock}_{M\oplus N}^\Sigma$ by the above 
differential $D_{(F^\cdot),g}$. Our main result is Theorem \ref{5.1}.

\medskip\noindent
{\bf Theorem \ref{5.1}.} 
The cohomology of  ${\rm Fock}_{M\oplus K^\dual}^\Sigma$ with respect to 
$D_{(F^\cdot),g}$ is isomorphic to the cohomology of a twisted chiral de Rham 
sheaf on the quintic $\sum_{i=0}^4F^i=0$ given by $R^i$.
\medskip

The twisted chiral de Rham sheaf in question is the one studied in 
\cite{gerbes1,gerbes2,gerbes3}. It appears that our construction provides,
rather unexpectedly, a specific choice among such sheaves, which was pointed to us 
by Malikov. In another limit we expect to see the Landau-Ginzburg
phase of the theory. Thus, the CY/LG correspondence considered in \cite{Witten}
is manifest in our construction.

\smallskip
The paper is organized as follows. In Section \ref{sec1}, we 
recall the construction of \cite{Borvert} as it applies to the case of quintics in
$\PP^4$. We recall the Calabi-Yau -- Landau-Ginzburg correspondence in this
setting. In Section \ref{sechet}, we define the vertex algebras for the (0,2) sigma 
model of the quintic, see Definition \ref{hetd}. Section \ref{seccoh}
is devoted to the proof of the technical result Theorem \ref{tricky} which 
is necessary to apply the method of \cite{Borvert} to this setting. Theorem \ref{tricky}
may be 
of independent interest, as it gives a novel way of constructing a twisted
chiral de Rham sheaf in some cases.
In Section \ref{CYLG}, we prove the main Theorem \ref{5.1}.
In Section \ref{chiral},
we discuss further properties of the vertex algebras for (0,2) models
on the quintic that follow from the techniques of \cite{Borvert} and \cite{chiralrings}.
Specifically, we focus on the description of their chiral rings.
Finally, in Section \ref{last} we sketch some future directions of research.

\smallskip
{\bf Acknowledgements}. We thank Fyodor Malikov for insightful comments
on the preliminary version of the paper. LB thanks Ron Donagi for directing his attention to the 
topic.  LB's work was  supported  by NSF DMS-1003445.
RK thankfully acknowledges
support from NSF DMS--0805881. He also would like to thank the
Institute for Advanced Study for its support during the project.
While at the IAS, RK's work was supported by the NSF under agreement
DMS--0635607.
Any opinions, findings and conclusions or
recommendations expressed in this
material are those of the authors and do not necessarily
reflect the views of the National Science Foundation.

\section{Overview of vertex operator algebras of mirror symmetry for  the quintic}
\label{sec1}

For a smooth manifold $X$, the chiral de Rham complex $\MSV(X)$ is 
a sheaf of vertex algebras on $X$ constructed in \cite{MSV}. In a given 
coordinate system near a point on $X$ this sheaf is 
generated by $4\dim X$ free fields $b^i$, $\phi^i$, $\psi_i$, $a_i$ with 
the operator product expansions (OPEs) 
$$
a_i(z)b^j(w)\sim \delta_i^j(z-w)^{-1},~~\phi^i(z)\psi_j(w)\sim\delta^i_j(z-w)^{-1}
$$
and all the others nonsingular. Here the fields $a$ and $b$ are bosonic and fields
$\phi$ and $\psi$ are fermionic. The $b$ fields transform like coordinates
on $X$. Products of $b$ and $\phi$ transform under the coordinate changes 
as differential $k$-forms (where $k$ 
is the number of $\phi$ factors). Products of $b$ and $\psi$ transform as 
polyvector fields.

\smallskip
The sheaf ${\rm MSV}(X)$ carries a natural conformal structure, in fact
it contains a natural $N=1$ algebra in it. If, in addition,
$X$ is a Calabi-Yau manifold, then depending on a choice of nowhere vanishing
holomorphic volume form (up to constant), the $N=1$ structure can be extended to
$N=2$ structure, see \cite{MSV}.

\smallskip
For a manifold $X$, the cohomology $H^*(\MSV(X))$ of the chiral de Rham complex
on it provides a fascinating invariant. It inherits the vertex algebra structure from the chiral de Rham complex. Its natural $N=1$ structure is extended to
a natural $N=2$ structure when $X$ is a Calabi-Yau (in fact if $X$ is in addition compact,
then the choice of the volume form is unique up to scaling, so the $N=2$ structure
is canonically defined). From the string theory point of view
$H^*(\MSV(X))$
can be thought of as a large K\"ahler limit of the space of half-twisted type
II string theory with target $X$, see \cite{WK}.

\smallskip
We will now review the (fairly) explicit description 
of the cohomology of the chiral de Rham complex
for a smooth quintic in $\PP^4$, which was obtained in \cite{Borvert}. We 
will also describe the cohomology of the 
chiral de Rham complex  for the canonical bundle $W$ over 
$\PP^4$. 

\smallskip
Consider the dual lattices $M$ and $N$ defined as 
$$
M:=\{(a_0,\ldots,a_4)\in \ZZ^5,\sum a_i =0 \hskip -10pt\mod 5\};
~~~
N:=\ZZ^5+\ZZ(\frac 15,\ldots, \frac 15)
$$
with the usual dot product pairing.
We introduce elements $\deg=(1,\ldots, 1)\in M$ and $\deg^\dual =(\frac 15,\ldots, \frac 15)$ 
in $N$.

\smallskip
The cone $K$ in $M$ is defined by the inequalities $a_i\geq 0$. The intersection of $K$ with the hyperplane 
$\bullet\cdot \deg^\dual=1$ is the polytope $\Delta\in M$. This is a four-dimensional simplex which is the convex 
hull of $(5,0,0,0,0),\ldots,(0,0,0,0,5)$. The
 dual cone $K^\dual$ in $N$ is also defined by nonnegativity of the coordinates.
The polytope $\Delta^\dual = K^\dual\cap \{\deg\cdot\bullet = 1\}$ is the simplex with vertices $(1,0,0,0,0),\ldots,(0,0,0,0,1)$. The only other lattice point of $\Delta^\dual$ is $\deg^\dual$. 

\begin{remark}
The lattice points in $\Delta$ correspond to monomials of degree $5$ in homogeneous coordinates on  $\PP^4$
while the lattice points in $\Delta^\dual$ correspond to codimension one
torus strata on the canonical bundle $W$ over $\PP^4$.
\end{remark}

We will now describe briefly the construction of the vertex algebras ${\rm Fock}_{M\oplus N}$
and ${\rm Fock}_{M\oplus N}^\Sigma$, following \cite{Borvert}.
We start with the vertex algebra ${\rm Fock}_{0\oplus 0}$ generated by $10$ free bosonic and $10$ free fermionic
fields based on the lattice $M\oplus N$ with operator product expansions
$$
m^{bos}(z)n^{bos}(w) \sim \frac {m\cdot n }{(z-w)^2},~~
m^{ferm}(z)n^{ferm}(w) \sim \frac {m\cdot n }{(z-w)}
$$
and all other OPEs nonsingular. 
We then consider the lattice vertex algebra ${\rm Fock}_{M\oplus N}$ with
additional vertex operators 
$\ee^{\int m^{bos}(z)+n^{bos}(z)}$ (with the appropriate cocycle, see \cite{Borvert}).
They satisfy
\begin{equation}\label{vertop}
\begin{array}{c}
\ee^{\int m_1^{bos}(z)+n_1^{bos}(z)}
\ee^{\int m_2^{bos}(w)+n_2^{bos}(w)}\\
=(z-w)^{m_1\cdot n_2+m_2\cdot n_1 }
\ee^{\int m_1^{bos}(z)+n_1^{bos}(z)+m_2^{bos}(w)
+n_2^{bos}(w)}
\end{array}
\end{equation}
with the normal ordering implicitly applied. Here the right hand side needs to
be expanded at $z=w$.

\smallskip
Consider the (generalized) fan $\Sigma$ in $N$ given as follows. Its maximum-dimensional cones are 
generated by $\deg^\dual, -\deg^\dual$ and four out of the five vertices of $\Delta^\dual$. It is the preimage
in $N$ of the fan of $\PP^4$ given by the images of the generators of $\Delta^\dual$ in $N/\ZZ\deg^\dual$.
Then define the partial lattice vertex algebra ${\rm Fock}_{M\oplus N}^\Sigma$ by setting the 
product in \eqref{vertop} to zero
if $n_1$ and $n_2$ do not lie in the same cone of $\Sigma$.
We similarly define the vertex algebras ${\rm Fock}_{M\oplus K^\dual}$ and ${\rm Fock}_{M\oplus K^\dual}^\Sigma$.

\smallskip
The following results have been proved in \cite{Borvert}.
\begin{proposition}
Let $W\to \PP^4$ be the canonical bundle.
Then the cohomology of the chiral de Rham complex $\MSV(W)$
is isomorphic to
the cohomology of ${\rm Fock}_{M\oplus K^\dual}^\Sigma$ with respect to the differential
$$D_g={\rm Res}_{z=0}\sum_{n\in\Delta^\dual} g_n n^{ferm}(z)\ee^{\int n^{bos}(z)}$$
for any collection of nonzero numbers $g_n, n\in \Delta^\dual$.
\end{proposition}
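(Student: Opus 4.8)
The plan is to compute both sides through the combinatorial model of the chiral de Rham complex on the smooth toric variety $W=X_\Sigma$ and to match them term by term. Since $W$ is smooth it is covered by the affine charts $U_\sigma\cong\CC^5$ indexed by the maximal cones $\sigma\in\Sigma$, and on each chart $\MSV(U_\sigma)$ is the free-field vertex algebra of \cite{MSV}. First I would record the explicit description of these local algebras inside a localization of ${\rm Fock}_{M\oplus K^\dual}$: the bosonic zero modes label the semigroup $\sigma^\dual\cap M$ of regular monomials on $U_\sigma$, the vertex operators $\ee^{\int n^{bos}(z)}$ with $n$ ranging over $\sigma$ encode the action of these monomials, and the fermionic fields $\phi,\psi$ of the free-field picture are recovered from the $M$- and $N$-fermions. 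The point of this step is that ${\rm Fock}_{M\oplus K^\dual}$ already carries, as part of its vertex algebra structure, the chiral de Rham structure of each affine chart, so that the cone-$\sigma$ part of the Fock space reproduces the sections $\MSV(U_\sigma)$ as a vertex algebra.

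Next I would realize the gluing. The superscript $\Sigma$ sets the product of $\ee^{\int n_1^{bos}(z)}$ and $\ee^{\int n_2^{bos}(w)}$ to zero unless $n_1,n_2$ lie in a common cone of $\Sigma$, which is exactly the incidence data of the nerve of the cover $\{U_\sigma\}$. Organizing ${\rm Fock}_{M\oplus K^\dual}^\Sigma$ by its $N$-momentum and grading by the value $\deg\cdot n$ of the total momentum $n$, I would identify the $D_g$ complex with the \v{C}ech complex of this cover with coefficients in $\MSV$. Concretely, each generator $n\in\Delta^\dual$ is a primitive ray generator of $\Sigma$; the operator $\ee^{\int n^{bos}(z)}$ shifts the $N$-momentum by $n$ and so realizes the restriction map across the corresponding divisor, while the fermion $n^{ferm}(z)$ supplies the alternating sign and raises the \v{C}ech degree by one. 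Taking ${\rm Res}_{z=0}$ turns the sum over $n$ into the total \v{C}ech coboundary, so that $D_g$ is the \v{C}ech differential and $\check H^\bullet(\{U_\sigma\},\MSV)\cong H^*(\MSV(W))$ yields the desired isomorphism.

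Two formal points should be checked along the way. That $D_g^2=0$ follows from the operator product expansions: for pure $N$-side operators the exponent $m_1\cdot n_2+m_2\cdot n_1$ in \eqref{vertop} vanishes and the fermion OPE $n_1^{ferm}(z)n_2^{ferm}(w)$ is nonsingular, so the relevant residues anticommute and the square of $D_g$ is zero because the vertices of the simplex $\Delta^\dual$ contribute with opposite signs. Independence of the isomorphism from the nonzero constants $g_n$ follows by conjugating $D_g$ with a zero-mode Cartan operator, i.e. by the torus automorphism of ${\rm Fock}_{M\oplus K^\dual}^\Sigma$ that rescales each $\ee^{\int n^{bos}(z)}$; this changes $g_n$ to $\lambda_n g_n$ for arbitrary $\lambda_n\neq 0$ while inducing an isomorphism on cohomology, reflecting the fact that the torus acts equivariantly on $W$ and on its chiral de Rham sheaf.

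The main obstacle will be the first step together with the precise matching in the second: one must verify that the free-field local sections of $\MSV(U_\sigma)$ are faithfully reproduced by the cone-$\sigma$ part of ${\rm Fock}_{M\oplus K^\dual}$ — including the correct bosonic cocycle of \eqref{vertop} and the transformation of $b\phi$- and $b\psi$-monomials as forms and polyvector fields — and that the affine charts form a Leray cover for $\MSV$, so that the \v{C}ech complex indeed computes the sheaf cohomology. In effect this is the content of the combinatorial model for chiral de Rham on smooth toric varieties, and the remaining work is the bookkeeping of gradings, signs and momenta needed to see that the \v{C}ech complex built on the rays of $\Sigma$, with coboundary $D_g$, has precisely the cohomology of $\MSV(W)$.
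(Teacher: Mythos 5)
Your proposal founders on a structural misidentification of the role of $D_g$. In your first step you assert that the cone-$\sigma$ part of the Fock space ``already carries'' the chiral de Rham structure of the chart, i.e.\ that ${\rm Fock}_{M\oplus(\sigma\cap N)}$ reproduces the sections $\MSV(U_\sigma)$ as a vertex algebra; having spent no differential locally, you are then forced to read $D_g$ as the \v{C}ech coboundary of the cover. Both halves are wrong. The space ${\rm Fock}_{M\oplus(\sigma\cap N)}$ is far larger than $\MSV(U_\sigma)$: already its $N$-momentum-zero, conformal-weight-zero, fermion-number-zero part contains the states $\ee^{\int m^{bos}}$ for \emph{all} $m\in M$, i.e.\ all Laurent monomials (functions on the open torus), and nothing in the vertex algebra structure alone cuts this down to the functions regular on $U_\sigma$; in addition all the $N$-side oscillators are present. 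The correct local statement --- which is the heart of \cite{Borvert} and is restated verbatim in this paper's proof of Theorem \ref{5.1} --- is that $\MSV(U_\sigma)$ is the \emph{cohomology} of ${\rm Fock}_{M\oplus(\sigma\cap N)}$ with respect to the screening-type differential $D_g^\sigma={\rm Res}_{z=0}\sum_{n\in\Delta^\dual\cap\sigma}g_n\, n^{ferm}(z)\ee^{\int n^{bos}(z)}$. In other words, $D_g$ is the \emph{local} differential, not the gluing differential, and your plan leaves the genuinely hard step --- which your last paragraph correctly flags as the main obstacle --- with no mechanism to carry it out.

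Your identification of $D_g$ with the \v{C}ech coboundary also fails on direct inspection: restriction maps $\MSV(U_\sigma)\to\MSV(U_{\tau})$ for a face $\tau\subseteq\sigma$ respect the $N$-momentum decomposition (at the Fock level they are induced by the momentum projections ${\rm Fock}_{M\oplus(\sigma\cap N)}\to{\rm Fock}_{M\oplus(\tau\cap N)}$), whereas $\ee^{\int n^{bos}}$ shifts momentum by $n$; moreover a \v{C}ech differential for the cover of $W$ by its five maximal toric charts would force cohomology to vanish above the nerve dimension, whereas the degree raised by $D_g$ is the fermion number, which is unbounded on the Fock space. The actual argument --- the present paper gives no proof of this Proposition but quotes it from \cite{Borvert} --- has two layers: (i) the local screening computation $H^*({\rm Fock}_{M\oplus(\sigma\cap N)},D_g^\sigma)\cong\MSV(U_\sigma)$ for each face $\sigma$ of $K^\dual$ compatible with $\Sigma$, and (ii) toric \v{C}ech cohomology \cite[Theorem 7.14]{Borvert}, in which the truncation $^\Sigma$ realizes ${\rm Fock}^\Sigma_{M\oplus K^\dual}$ inside an inclusion--exclusion resolution by the ${\rm Fock}_{M\oplus(\sigma\cap N)}$, and a spectral sequence (compare Proposition \ref{anyaff}) identifies the total $D_g$-cohomology with $\bigoplus_p H^p(W,\MSV(W))$. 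A smaller point: your $g$-independence argument by conjugating with a zero-mode (i.e.\ linear) rescaling $r:N\to\CC$ gives only a five-parameter family and fixes the ratio $\bigl(\prod_i g_{v_i}\bigr)/g_{\deg^\dual}^5$; it is precisely the passage to ${\rm Fock}^\Sigma$ that allows $r$ to be merely piecewise linear on the cones of $\Sigma$, so that all six coefficients $g_n$ can be rescaled independently.
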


\begin{proposition}
The cohomology of the chiral de Rham complex of a smooth quintic $F(x_0,\ldots, x_4)=0$ which is 
transversal to the torus strata is given 
by the cohomology of ${\rm Fock}_{M\oplus K}^\Sigma$ by the differential 
$$D_{f,g}={\rm Res}_{z=0}\Big(\sum_{m\in\Delta} f_m m^{ferm}(z)\ee^{\int m^{bos}(z)}
+\sum_{n\in\Delta^\dual} g_n n^{ferm}(z)\ee^{\int n^{bos}(z)}\Big)$$
where $g_n$ are arbitrary nonzero numbers and $f_m$ is the coefficient of $F$ by the corresponding
monomial. 
\end{proposition}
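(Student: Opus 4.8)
The plan is to take the preceding Proposition on the canonical bundle $W\to\PP^4$ as the base case and to account for the quintic through the additional $f$-term of the differential. Geometrically, the smooth quintic $Y=\{F=0\}\subset\PP^4$ sits inside $W=\mathrm{Tot}(\Oa(-5))$ as a hypersurface in the zero section, and the degree-five polynomial $F$ is a section of the anticanonical bundle $\Oa(5)$, hence a fiberwise-linear function (a superpotential) on $W$. Since the lattice points of $\Delta$ are exactly the degree-five monomials, writing $F=\sum_{m\in\Delta}f_m x^m$ makes the coefficients $f_m$ the data entering $D_f=\mathrm{Res}_{z=0}\sum_{m\in\Delta}f_m m^{ferm}(z)\ee^{\int m^{bos}(z)}$. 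Recall that $\MSV$ is defined locally by the free fields and glued by the monomial toric transition functions, which on the combinatorial side are encoded by the bosonic vertex operators $\ee^{\int m^{bos}}$; this faithfulness is what lets the lattice model see the geometry. First I would set up a chiral analogue of the conormal/Koszul resolution expressing $\MSV(Y)$ in terms of the ambient toric data together with the section $F$, so that the hypersurface is implemented precisely by $D_f$.

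Second, I would make $({\rm Fock}_{M\oplus K}^\Sigma, D_{f,g})$ into a genuine bicomplex. This requires $D_{f,g}^2=0$, which reduces to $D_f^2=0$, $D_g^2=0$ and $D_fD_g+D_gD_f=0$. These follow from the operator product expansions, in particular \eqref{vertop} together with the dual pairing of $M$ and $N$: the bosonic factors produce symmetric powers $(z-w)^{m_1\cdot n_2+m_2\cdot n_1}$ while the fermions $m^{ferm}$ are antisymmetric, so that after taking residues the would-be obstruction terms cancel. I regard this step as routine bookkeeping of lattice pairings and the chosen cocycle.

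Third, I would run the spectral sequence of the bicomplex, taking $D_g$-cohomology first. By the argument of the preceding Proposition, this first page computes the chiral de Rham cohomology of the ambient toric geometry whose fan is $\Sigma$, and the induced differential on it is the map determined by $F$, i.e.\ the chiral restriction to the hypersurface. Identifying the resulting term with $H^*(\MSV(Y))$ is where transversality to the torus strata is used: it guarantees that $Y$ is smooth and meets every torus orbit properly, so that $F$ restricts to a regular section on each stratum, the Koszul complex is a resolution there, and no spurious contributions survive from the lower-dimensional cones of $\Sigma$. Granting this, the spectral sequence degenerates and yields the claimed isomorphism.

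The main obstacle I anticipate is this last identification: making the chiral adjunction for the hypersurface $Y\subset\PP^4\hookrightarrow W$ rigorous at the level of sheaves of vertex algebras, and confirming that the $f$-term reproduces the restriction of the chiral de Rham complex to $Y$ compatibly with the bosonic operators $\ee^{\int m^{bos}}$ and the fermions $m^{ferm}$. Controlling the degeneration of the local free-field generators along the torus-invariant strata, and verifying that transversality exactly cancels the associated error terms, is the delicate point; by comparison the formal verifications $D_{f,g}^2=0$ and the lattice-pairing combinatorics are straightforward.
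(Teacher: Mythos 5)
Your proposal follows essentially the same route as the proof the paper relies on (it cites \cite{Borvert} for this Proposition, and the same scheme reappears in the paper's own Theorem \ref{5.1}): realize $F$ as a fiberwise-linear superpotential on the canonical bundle $W\to\PP^4$, take $D_g$-cohomology first so that the preceding Proposition identifies the first page with the cohomology of $\MSV(W)$, implement the hypersurface by the induced Koszul-type differential $D_f$ whose cohomology gives $\MSV(X)$ (with transversality to the torus strata ensuring regularity on every stratum), and finish by degeneration of the spectral sequence together with gluing over the toric charts. Your sketch matches that structure and correctly isolates the chiral Koszul identification of the $D_f$-cohomology of $\MSV(W)$ with $\MSV(X)$ as the one genuinely delicate step.
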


The cohomology of the chiral de Rham complex should be viewed as just an approximation to
the true physical vertex algebra of  the half-twisted theory. It has been conjectured
in \cite{Borvert} that the effect of adding instanton corrections to this algebra must 
correspond to the removal of the superscript
$^\Sigma$ in the calculation of the cohomology. Crucially,
while the cohomology of ${\rm Fock}_{M\oplus K^\dual}^\Sigma$ with respect to  $D_{f,g}$ is independent from $g$ (as long as all $g_n$ are nonzero), the cohomology of ${\rm Fock}_{M\oplus 
K^\dual}$ with respect to $D_{f,g}$ depends on it. 

\begin{definition}\label{vfg} 
Fix $F$ and the corresponding $f_m$. As the $g_n$ vary, consider the family of vertex 
algebras $V_{f,g}$ which are the cohomology of ${\rm Fock}_{M\oplus K^\dual}$ with respect to
 the differential
$$D_{f,g}={\rm Res}_{z=0}\Big(\sum_{m\in\Delta} f_m m^{ferm}(z)\ee^{\int m^{bos}(z)}
+\sum_{n\in\Delta^\dual} g_n n^{ferm}(z)\ee^{\int n^{bos}(z)}\Big).$$
We call this a family of vertex algebras of mirror symmetry associated to the quintic $F=0$.
\end{definition}

The vertex algebras of mirror symmetry provide a useful way of thinking about the so-called 
Calabi-Yau -- Landau-Ginzburg (CY-LG)
correspondence for the $N=2$ theories related to the quintic, which we describe below.

\smallskip
There are a priori six parameters in the Definition \ref{vfg},  that correspond to the values of 
$g_n$ for $n=\deg^\dual$ or the vertices $v_i$ of the simplex $\Delta^\dual$. However, up
to torus symmetry, the algebra depends only on $(\prod_{i} g_{v_i})/g_{\deg^\dual}^5$,
where $v_i$ are the vertices of $\Delta^\dual$.
Indeed, for any linear function $r:N\to \CC$ one can rescale 
$\ee^{\int n^{bos}(z)}$  to $\ee^{r(n)}\ee^{\int n^{bos}(z)}$. This will not change the OPEs
of any fields in question. This shows that the collection $g_n$ can be replaced
by $g_n \ee^{r(n)}$ for any $r$.

\smallskip
Let us now pick a piecewise-linear real-valued 
function $\rho$ which is strongly convex on $\Sigma$.
If we rescale  $\ee^{\int n^{bos}(z)}$  to  $\ee^{\lambda \rho(n)}\ee^{\int n^{bos}(z)}$
for $\lambda\to\infty$, we see that the OPEs of the new vertex operators start to approach
those for ${\rm Fock}_{M\oplus K^\dual}^\Sigma$. This implies that as the 
ratio $(\prod_{i} g_{v_i})/g_{\deg^\dual}^5$. approaches $0$, the vertex algebras of 
mirror symmetry approach (in some rather weak sense) the cohomology of the chiral
de Rham complex on the quintic. 
Specifically, while it is not known if the family of algebras stays flat after taking 
the quotient by $D_{f,g}$,
it is still reasonable to think of the cohomology of chiral de Rham complex of $F=0$ as a 
limit of $V_{f,g}$. Similarly, as this ratio approaches to $0$ one gets to the so-called 
orbifold point on the K\"ahler moduli  space of the theory, which is in the Landau-Ginzburg 
region of the moduli space. While the $D_{f,g}$ cohomology in fact jumps at the orbifold
point (see \cite{Malikov-Gorbounov}), we still want to think of the family 
$V_{f,g}$ 
as interpolating between the Calabi-Yau and the Landau-Ginzburg phases of the theory. 

\section{Vertex algebras of (0,2) nonlinear sigma models for the quintic}\label{sechet}
In the influential paper \cite{Witten} Witten has, in particular, considered a CY-LG correspondence for some (0,2) models. 
The key observation of our paper is that we can very naturally 
modify the vertex algebras of mirror symmetry for the quintic to accommodate this 
larger class of theories. The goal of this section is to give a definition of the vertex
algebras of the (0,2) sigma models for the quintic, analogous to Definition \ref{vfg}.

\smallskip
Specifically, in \cite[(6.39-40)]{Witten} Witten considered a homogeneous polynomial $G$ of 
degree $5$ in the variables $x_i$ and five polynomials $G^i$ in variables $x_i$ with
$\sum_i x_iG^i=0$ and has constructed  (physically)
a one-dimensional family of theories that interpolates
from the Calabi-Yau to the Landau-Ginzburg phases. The Calabi-Yau theory in question is defined
by the quintic $G=0$, but with the vector bundle that is a deformation of the tangent bundle, given
by $G^i$. 

\smallskip
Clearly, the above data are equivalent to a collection of five polynomials of degree four in $x_i$
which are given by $R^i=\partial_i G + G^i$. Indeed, $G$ can then be uniquely recovered as 
$\frac 15\sum_i x_iR^i$. Equivalently, we may consider five polynomials $F^i=x_iR^i$ which 
are of degree $5$ with the property that $F^i\vert_{x_i=0}=0$. In this language, the quintic
is simply $\sum_i F^i=0$.

\begin{definition}\label{hetd}
As in Section \ref{sec1} consider the vertex algebra ${\rm Fock}_{M\oplus K^\dual}$.
Define by $m_i$ the basis of $M_\QQ$ which is dual to the basis of $N_\QQ$ given by the 
vertices of $\Delta^\dual$. Consider the differential
$$D_{(F^\cdot),g}={\rm Res}_{z=0}\Big(\sum_{\stackrel{m\in \Delta}{0\leq i\leq 4}}F^i_m m_i^{ferm}(z)\ee^{\int m^{bos}(z)}
+\sum_{n\in\Delta^\dual} g_n n^{ferm}(z)\ee^{\int  n^{bos}(z)}\Big)
$$
where $g_n$ are six generic complex numbers and $F^i_m$ is the coefficient of the monomial
of degree $5$ of $F^i$ that corresponds to $m$. We call the corresponding cohomology spaces  $V_{(F^\cdot),g}$
the vertex algebras of the (0,2) sigma model on $\sum_iF^i=0$.
\end{definition}

The above definition implicitly assumes that $D_{(F^\cdot),g}$ is a differential, but this requires a verification. 
\begin{proposition}\label{diff}
The above-defined $D_{(F^\cdot),g}$ is a differential and the cohomology inherits the structure of a vertex algebra.
\end{proposition}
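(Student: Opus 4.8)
The plan is to prove directly that $D_{(F^\cdot),g}^2=0$ and then to invoke the standard mechanism by which the zero mode of an odd current acts as a differential on a vertex algebra. Write $D_{(F^\cdot),g}={\rm Res}_{z=0}J(z)=J_{(0)}$, where $J$ is the current sitting inside the residue. Each summand of $J$ is the product of exactly one fermionic field with one bosonic vertex operator, hence is odd, so $J$ is an odd field. For an odd current the Borcherds commutator formula gives $2J_{(0)}^2=(J_{(0)}J)_{(0)}$, so it suffices to show $J_{(0)}J=0$, i.e. that the coefficient of $(z-w)^{-1}$ in the operator product $J(z)J(w)$ vanishes. The whole computation thus reduces to reading off the $(z-w)^{-1}$ term in the pairwise OPEs of the summands of $J$.

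First I would split these pairwise products into three groups. For two operators both drawn from the first sum, the fermions $m_{i}^{ferm}$ and the lattice vectors $m$ all lie in $M$; since $M$ is isotropic for the hyperbolic pairing on $M\oplus N$ that governs both the fermionic OPE and the exponent in \eqref{vertop}, the fermion contraction vanishes and the prefactor in \eqref{vertop} is $(z-w)^0$, so these products are nonsingular. The identical argument, using isotropy of $N$, disposes of two operators both from the second sum. Hence only the cross terms between the first and second sums can contribute to $J_{(0)}J$.

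For a cross term I would contract $m_i^{ferm}(z)$ against $n^{ferm}(w)$, producing the coefficient $m_i\cdot n$ over $(z-w)$, while \eqref{vertop} contributes the factor $(z-w)^{m\cdot n}$; after normal ordering the surviving exponential $\ee^{\int(m+n)^{bos}(w)}$ is regular at $z=w$. The leading singular term is therefore $(m_i\cdot n)(z-w)^{m\cdot n-1}$, and since $m\cdot n\geq 0$ for $m\in\Delta$ and $n\in\Delta^\dual$, a residue can only arise when $m\cdot n=0$. I would then run through the lattice points of $\Delta^\dual$. For $n=\deg^\dual$ one has $m\cdot\deg^\dual=1\neq 0$, so nothing survives. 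For a vertex $n=v_j$ the condition $m\cdot v_j=0$ says exactly that the $j$-th coordinate of $m$ is zero, while $m_i\cdot v_j=\delta_{ij}$ by the very definition of the dual basis $m_i$, so the only surviving fermion coefficient is the one with $i=j$. But $F^j$ satisfies $F^j|_{x_j=0}=0$, which forces $F^j_m=0$ precisely when the $j$-th coordinate of $m$ vanishes. Every potential residue is therefore killed, giving $J_{(0)}J=0$ and hence $D_{(F^\cdot),g}^2=0$.

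Finally, the vertex-algebra structure on cohomology is automatic: as the zero mode of a field, $D_{(F^\cdot),g}=J_{(0)}$ acts as an odd derivation of every $n$-th product (again by the Borcherds commutator formula), so its kernel is a subalgebra and its image an ideal, and the quotient inherits the vertex algebra structure. I expect the main obstacle to be the bookkeeping in the cross-term OPE---tracking the cocycle and normal-ordering signs together with the exact power of $(z-w)$---but these signs are irrelevant to the \emph{vanishing}, which rests entirely on the two structural facts $m_i\cdot v_j=\delta_{ij}$ and $F^i|_{x_i=0}=0$. This is precisely the heterotic counterpart of the cancellation forced by $m\cdot n=0$ that makes $D_{f,g}$ a differential in the ordinary mirror-symmetry case.
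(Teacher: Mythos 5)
Your proof is correct and takes essentially the same approach as the paper: both reduce the problem to the cross-term OPEs between the $M$-side and $N$-side summands and kill them using exactly the same three facts, namely $m\cdot\deg^\dual=1$ for $m\in\Delta$, $m_i\cdot v_j=\delta_{ij}$, and $F^j\vert_{x_j=0}=0$. The only difference is packaging---the paper verifies that all the relevant OPEs are nonsingular (so all modes anticommute), while you verify vanishing of the residue term via the Borcherds commutator formula---but since the worst possible cross-term singularity is a first-order pole, the two computations coincide.
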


\begin{proof}
We need to show that all modes of the corresponding field of the algebra anti-commute with each other.
This means verifying that the OPEs of $F^i_m m_i^{ferm}(z)\ee^{\int m^{bos}(z)}$ and
$g_n n^{ferm}(z)\ee^{\int n^{bos}(z)}$ with each other and themselves are nonsingular. The only 
interesting cases are the OPEs between the above two operators. There are three possibilities:  
 $n=\deg^\dual$, $n$ is a vertex of $\Delta^\dual$ that corresponds to $i$ and $n$ is
some other vertex.

\smallskip
Case 1: $n=\deg^\dual$. Because $m\cdot \deg^\dual=1$,
the OPE of the bosonic terms $\ee^{\int m^{bos}(z)}$ and $\ee^{\int n^{bos}(z)}$
will start with $(z-w)^1$, which counteracts the $(z-w)^{-1}$ from the fermionic terms.

\smallskip
Case 2: $n$ is a vertex of $\Delta^\dual$ equal to $i$. Because $F^i\vert_{x_i=0}=0$, we may
assume that $m$ corresponds to a monomial that is divisible by $x_i$. Thus, $m\cdot n\geq 1$
and we proceed as in the previous case.

\smallskip
Case 3: $n$ is some other vertex of $\Delta^\dual$.  Then $m_i\cdot n=0$ and the fermionic
OPE has no pole at $z=w$. The bosonic OPE has no pole either, because $m\cdot n\geq 0$.
Thus the OPE is nonsingular.
\end{proof}

\begin{remark}
If one uses the same $N$-part of the differential $D_{f,g}$ but attempts to
consider various elements of $M^{ferm}(z) \ee^{\int \Delta^{bos}(z)}$ for the $M$-part, the 
condition of being a differential is equivalent to it being given by Definition \ref{hetd} for
some $F^i$ with $F^i\vert_{x_i=0}=0$.
\end{remark}
 
\begin{remark}
In the original setting of the vertex algebras of mirror symmetry, the cohomology 
with respect to $D_{f,g}$ 
inherited an $N=2$ structure from ${\rm Fock}_{M\oplus K^\dual}$ which was generated
by the fields $M^{ferm}\cdot N^{bos} - \partial_z \deg^{ferm}$ and $M^{bos}\cdot N^{ferm}-\partial_z
(\deg^\dual)^{ferm}$. Typically, this structure does not super-commute with the differential
$D_{(F^\cdot),g}$ and thus does not descend to the cohomology $V_{(F^\cdot),g}$. However, 
part of the structure still descends, as is shown below.
\end{remark}

\begin{proposition}\label{3.5}
Consider the Virasoro algebra and affine $U(1)$ algebras on ${\rm Fock}_{M\oplus K^\dual}$
which are given by 
$$
L(z):=\sum_i m_i^{bos}n_i^{bos}+\sum_i (\partial_zm_i^{ferm})n_i^{ferm}-\partial_z (\deg^\dual)^{bos}
$$
$$
J(z):=\sum_i m_i^{ferm}n_i^{ferm}+\deg^{bos}-(\deg^\dual)^{bos}.
$$
Here $m_i$ and $n_i$ are elements of a dual basis.
These fields commute with $D_{(F^\cdot),g}$ and thus descend to $V_{(F^\cdot),g}$.
\end{proposition}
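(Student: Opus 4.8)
The plan is to use the standard fact that a homogeneous field $\Phi(z)$ descends to the $D_{(F^\cdot),g}$-cohomology exactly when it supercommutes with the differential, and that for $D={\rm Res}_{w=0}\Da(w)$, where $\Da(w)$ denotes the current appearing under the residue in Definition \ref{hetd}, one has $[D,\Phi(z)]={\rm Res}_{w=z}\Da(w)\Phi(z)$, i.e. the coefficient of $(w-z)^{-1}$ in the operator product expansion. Since $\Da(w)$ is a sum of the two families of fields $m_i^{ferm}(w)\ee^{\int m^{bos}(w)}$ (for $m\in\Delta$, $0\le i\le 4$) and $n^{ferm}(w)\ee^{\int n^{bos}(w)}$ (for $n\in\Delta^\dual$), and since $L$ and $J$ are bosonic, it suffices to show that each such summand is a conformal primary of weight $1$ with respect to $L$ and of charge $0$ with respect to $J$. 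Indeed, for a weight-$h$ primary $\Oa$ one computes ${\rm Res}_{w=z}\Oa(w)L(z)=(h-1)\partial\Oa(z)$, and for a $J$-charge-$q$ field having only a simple pole with $J$ one gets ${\rm Res}_{w=z}\Oa(w)J(z)=-q\,\Oa(z)$; both vanish under the stated conditions.

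Throughout I would use that $M$ and $N$ are isotropic for the pairing on $M\oplus N$, so that $m\cdot m'=0$ for $m,m'\in M$ and $n\cdot n'=0$ for $n,n'\in N$, while $m\cdot n$ is the duality pairing; the only further inputs are the defining relations $\deg^\dual\cdot m=1$ for $m\in\Delta$ and $\deg\cdot n=1$ for $n\in\Delta^\dual$. Because the bosonic and fermionic fields have nonsingular mutual OPE, $L$ and $J$ split as a sum of a bosonic and a fermionic piece acting on the two tensor factors of each summand separately, so the total weight (resp. charge) is the sum of the two contributions. For the weights: the vertex operator $\ee^{\int p^{bos}}$ has bosonic weight $\tfrac12\,p\cdot p+\deg^\dual\cdot p$, the first term coming from the Sugawara piece $\sum_i m_i^{bos}n_i^{bos}$ and the second from the background-charge piece $-\partial(\deg^\dual)^{bos}$; this equals $1$ for $p=m\in\Delta$ and $0$ for $p=n\in\Delta^\dual$. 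Under $\sum_i(\partial m_i^{ferm})n_i^{ferm}$ the field $m_i^{ferm}$ has weight $0$ and every $n^{ferm}$ has weight $1$. Adding, both $m_i^{ferm}(w)\ee^{\int m^{bos}(w)}$ and $n^{ferm}(w)\ee^{\int n^{bos}(w)}$ acquire total weight $1$, and one checks they are primary since in each decoupled sector the OPE with the relevant part of $L$ truncates at the double pole.

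For the charges I would argue identically: the bosonic charge of $\ee^{\int p^{bos}}$ under $\deg^{bos}-(\deg^\dual)^{bos}$ is $(\deg-\deg^\dual)\cdot p$, equal to $0-1=-1$ for $p=m\in\Delta$ and $1-0=+1$ for $p=n\in\Delta^\dual$, while under $\sum_i m_i^{ferm}n_i^{ferm}$ the conjugate fermions $m_i^{ferm}$ and $n_i^{ferm}$ carry charges $+1$ and $-1$. The fermionic and bosonic contributions therefore cancel within each family, giving total charge $0$; and since a $U(1)$ current has only a simple pole with a vertex operator or a fermion, there is no double-pole contribution to the residue. This yields $[D,L(z)]=[D,J(z)]=0$, so both fields are chain maps and descend to $V_{(F^\cdot),g}$.

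The main obstacle I anticipate is purely bookkeeping: getting the fermionic Wick signs and the normal ordering right so that the fermionic pieces of $L$ and $J$ produce exactly the weights $0,1$ and the charges $+1,-1$ claimed above, and confirming the absence of cubic poles in the bosonic OPEs, which is what makes the summands genuinely primary rather than merely of the right weight. Conceptually, however, the proposition is really the assertion that $L$ and $J$ were assembled---via the specific terms $-\partial(\deg^\dual)^{bos}$ and $\deg^{bos}-(\deg^\dual)^{bos}$---precisely so that the numerology $\deg^\dual\cdot m=1$ and $\deg\cdot n=1$, together with the isotropy of $M$ and $N$, forces every term of $D_{(F^\cdot),g}$ to be a weight-$1$, charge-$0$ primary.
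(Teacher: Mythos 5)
Your proposal is correct and is, in substance, the same verification as the paper's: both arguments come down to showing that the OPE of each summand of the current defining $D_{(F^\cdot),g}$ with $L$ (resp.\ $J$) has vanishing first-order pole, so that the residue supercommutes and the fields descend to $V_{(F^\cdot),g}$. The paper does this by a direct Wick computation for the $m$-terms (citing \cite{Borvert} for the $n\in\Delta^\dual$ terms), while you package the identical OPE content uniformly through the weight-$1$, charge-$0$ primary criterion together with the numerology $\deg^\dual\cdot m=1$, $\deg\cdot n=1$ and the isotropy of $M$ and $N$ --- a difference of organization, not of method.
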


\begin{proof}
The parts of the differential that correspond to  $n\in\Delta^\dual$ have already been
considered in \cite{Borvert}. The  OPEs of the remaining terms with $J$ are
computed by 
$$m_i^{ferm}(z)
\ee^{\int m^{bos}(z)}J(w)
\sim \frac{(-m_i^{ferm}\ee^{\int m^{bos}(z)}
+m_i^{ferm}\ee^{\int m^{bos}(z)})}{(z-w)}\sim 0.
$$
The OPEs with $L$ are a bit more bothersome. We have 
$$
m_i^{ferm}(z)\ee^{\int m^{bos}(z)}L(w)
\sim (z-w)^{-1} m_i^{ferm}(z) (-m^{bos}(w)\ee^{\int m^{bos}(z)})
$$
$$
+(z-w)^{-1} (-\partial_z m_i^{ferm}\ee^{\int m^{bos}(z)})
+\partial_w\Big((z-w)^{-1} m_i^{ferm}(z) \ee^{\int m^{bos}(z)}\Big)
$$
$$
\sim (z-w)^{-2}m_i^{ferm}(z) \ee^{\int m^{bos}(z)}
+(z-w)^{-1}(-m_i^{ferm}m^{bos}-\partial_zm_i^{ferm})\ee^{\int m^{bos}}
$$
$$
\sim (z-w)^{-2}m_i^{ferm}(w) \ee^{\int m^{bos}(w)}
$$
which shows that the differential acts trivially on the corresponding field.
\end{proof}

\begin{remark}
Given the match of the data, the reader should already find it plausible 
that the algebras $V_{(F^\cdot),g}$ are the algebras of the (0,2)
models considered in \cite{Witten}. 
In what follows we will strengthen their connection to
the (0,2) models by showing that analogous "limit" algebra which is the 
cohomology of ${\rm Fock}_{M\oplus K^\dual}^\Sigma$ with respect to $D_{(F^\cdot),g}$ 
is isomorphic to the cohomology of an analog of the chiral de Rham complex
defined for deformations of the chiral de Rham complex in \cite{gerbes2,gerbes3}.
We closely follow  \cite{Borvert} and overcome the 
fairly minor technical difficulties that occur along the way.
\end{remark}

\section{A cohomology construction of a twisted chiral de Rham sheaf in a particular case}
\label{seccoh}

Let $X$ be a smooth manifold. Let 
$E$ be a vector bundle on $X$ such that $c_1(E)=c_1(TX)$
and $c_2(E)=c_2(TX)$. Assume further that $\Lambda^{\dim X}E$
is isomorphic to $\Lambda^{\dim X}TX$, and, moreover, pick a choice of 
such isomorphism. Then one can construct a collection of sheaves
$\MSV(X,E)$ of vertex algebras on $X$, which differ by regluings given by elements
of $H^1(X,(\Lambda^2 TX^\dual)^{closed})$,
see \cite{gerbes1,gerbes2,gerbes3}.
Locally, any such sheaf is again generated by 
$b^i, a_i, \phi^i,\psi_i$, however $\phi^i$ and $\psi_i$ now transform
as sections of $E^\dual$ and $E$ respectively. 
The OPEs between the $\phi$ and $\psi$ are governed by the pairing
between sections of $E^\dual$ and $E$.
The sheaves $\MSV(X,E)$ carry a natural structure of graded sheaves of vertex algebras.
If, in addition, $X$ is a Calabi-Yau, and one fixes a choice of the nonzero holomorphic volume form, 
then each of the sheaves $\MSV(X,E)$ acquires a conformal structure, as well as an additional affine $U(1)$ current $J(z)$ on it.

\smallskip
The goal of this section is to construct more explicitly 
a twisted chiral de Rham sheaf of $(X,E)$ for 
a particular class of $X$ and $E$. Specifically, if $X$ is a codimension one subvariety 
in a smooth variety $Y$ and $E$ is determined by a global holomorphic one-form on
a line bundle $W$ over $Y$, then we will be able to
calculate $\MSV(X,E)$ in terms of the usual chiral de Rham complex on $W$.

\smallskip
Let $\pi:W\to Y$ be a line bundle over an $n$-dimensional manifold $Y$ with zero section 
$s:Y\to W$. 
Let $\alpha$ be a holomorphic one-form on $W$ which is 
linear with respect to the natural $\CC^*$ action on $W$, i.e. for $\lambda\in\CC^*$ the
following holds
$\lambda^*\alpha=\lambda\alpha$. Consider the locus $X\subset Y$ of points $y$ 
such that $\alpha(s(y))$ as a function on the tangent space $TW_{s(y)}$ is zero on the vertical subspace.

\smallskip
Locally, we have coordinates $(y_1,\ldots,y_n)$ on $Y$. The bundle $W$ is trivialized so that the coordinates
near $s(y)$ are $(y_1,\ldots, y_n, y_{n+1})$. The homogeneity property of $\alpha$ implies that it is given by 
\begin{equation}\label{alpha}
\alpha=\sum_i y_{n+1}P_i(y_1,\ldots,y_n) dy_i + P(y_1,\ldots,y_n)dy_{n+1}.
\end{equation}
In these coordinates 
$X$ is locally given by $P(y_1,\ldots,y_n)=0$. We assume that $X$ is a smooth codimension one submanifold of $Y$. 

\smallskip
Consider the subbundle $E$ of $TY\vert_X$ which is locally defined as the kernel of 
$s^*\alpha$. We will assume 
that it is of corank $1$. In the local description above this means that $P_i$ and $P$ are not simultaneously zero. If $P_i=\partial_i P$ then $E$ is simply $TX$. 
The goal of the rest of 
this section is to show how a twisted chiral de Rham sheaf
$\MSV(X,E)$ can be defined in terms of the usual chiral de Rham complex of $W$.  

\smallskip
The global one-form $\alpha$ on $W$ gives rise to a fermion field $\alpha(z)$ in the chiral de Rham
complex of $W$. Its residue ${\rm Res}_{z=0}\alpha(z)$ gives an endomorphism of 
$\MSV(W)$ and of its pushforward $\pi_*\MSV(W)$ to $Y$.
\begin{theorem}\label{tricky}
The cohomology sheaf of $\pi_*\MSV(W)$ with respect to ${\rm Res}_{z=0}\alpha(z)$ is isomorphic to a twisted chiral de Rham sheaf of $(X,E)$.
\end{theorem}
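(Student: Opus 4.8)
The plan is to reduce the statement to a local computation on $Y$ and then compute the cohomology by localizing onto the zero section over $X$. Since both $\pi_*\MSV(W)$ and a twisted chiral de Rham sheaf of $(X,E)$ are sheaves on $Y$ (respectively $X$) and the assertion is local, I would fix a trivialization as in \eqref{alpha}, with coordinates $(y_1,\dots,y_n)$ on $Y$ and fiber coordinate $y_{n+1}$, so that $\MSV(W)$ is generated by the free fields $b^i,a_i,\phi^i,\psi_i$, $1\le i\le n+1$. In these terms the form $\alpha$ becomes the fermion field $\alpha(z)=\sum_{i=1}^n b^{n+1}P_i(b)\,\phi^i+P(b)\,\phi^{n+1}$ and $Q:={\rm Res}_{z=0}\alpha(z)$ is an odd derivation. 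Here $Q^2=0$ is immediate, since $\alpha(z)$ is built only from the $b$ and $\phi$ fields, whose mutual OPEs are nonsingular, so that $\alpha(z)\alpha(w)$ has no pole.

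The heart of the argument is the local cohomology computation, which I would organize as a localization onto $s(X)$. Writing $u:=P$ for a coordinate transverse to $X=\{P=0\}$ in $Y$ and $v:=y_{n+1}$ for the fiber coordinate, the form reads $\alpha=v\,\theta+u\,dv$, where $\theta:=s^*\alpha=\sum_i P_i\,dy_i$ is the corank-one form cutting out $E=\ker\theta$. First I would treat the classical (zero-mode) truncation, in which $b^i_0\leftrightarrow y_i$, $\phi^i_0\leftrightarrow dy_i$, and $Q$ degenerates to exterior multiplication $\alpha\wedge$ on $\Omega^\bullet_W$ tensored with polyvector fields. Because $\theta$ and $dv$ are independent and $\theta$ is nonvanishing on $X$, the zero locus of $\alpha$ is exactly $\{u=v=0\}=s(X)$, so the cohomology of $\alpha\wedge$ is supported there: the $u\,dv$ term restricts to $X$ and removes the fiber, while the $v\,\theta$ term collapses the form sector from $\Lambda^\bullet T^\dual Y|_X$ onto $\Lambda^\bullet E^\dual=\Lambda^\bullet(\ker\theta)^\dual$. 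This produces precisely the classical limit of $\MSV(X,E)$, with the twist from $TX$ to $E$ arising exactly because the $\theta$-direction is killed.

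To promote this to the full chiral statement I would filter the local algebra by conformal weight (equivalently, organize everything by the $\CC^*$-weight on the fibers of $W$, which makes each graded piece finite-dimensional and the spectral sequence convergent). The associated graded differential acts as the classical $\alpha\wedge$ mode by mode, so the first page is the classical cohomology computed above tensored with the oscillators transverse to $Q$. A homotopy operator assembled from the conjugate fields $\psi_{n+1},a_{n+1}$ (handling the fiber, i.e. $v$-direction) and the modes of $\psi$ along the $\theta$-direction (handling the $u$-direction) then shows that all higher non-zero-mode contributions are acyclic, so the spectral sequence degenerates and the cohomology is the all-modes version of the classical answer, i.e. the local model of $\MSV(X,E)$: bosonic $\tilde b,\tilde a$ for the $(n-1)$ coordinates on $X$, and fermionic $\tilde\phi,\tilde\psi$ transforming as $E^\dual$ and $E$.

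Finally I would globalize. The local isomorphisms are natural under changes of coordinate on $Y$ and retrivializations of $W$ compatible with the $\CC^*$-linearity of $\alpha$, so they patch to an isomorphism of sheaves of vertex algebras over $X$; since $\MSV(X,E)$ is only pinned down up to a regluing class in $H^1(X,(\Lambda^2 TX^\dual)^{closed})$, I would record that our construction selects one specific representative among these (the choice noted by Malikov), determined by $\alpha$ together with the fixed isomorphism $\Lambda^{\dim X}E\cong\Lambda^{\dim X}TX$, and check that the conformal structure and $U(1)$ current match in the Calabi-Yau case. I expect the main obstacle to be the third step: constructing the homotopy that simultaneously kills the non-zero modes in both the fiber ($u\,dv$) and hypersurface ($v\,\theta$) directions — these two pieces of $\alpha$ do not decouple — and verifying convergence of the weight-by-weight spectral sequence given the noncompact fibers of $W$.
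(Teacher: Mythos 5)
Your overall skeleton---a local free-field computation, identification of the low-weight cohomology with $E^\dual$ and $E$, and globalization by appeal to the classification of \cite{gerbes2}---matches the paper's. But the two steps that carry the real weight of the proof are either set up incorrectly or, as you concede, absent. First, your spectral sequence is vacuous as formulated: the differential ${\rm Res}_{z=0}\alpha(z)$ is \emph{homogeneous} of degree $-1$ in conformal weight (and $+1$ in fermion number), so filtering by conformal weight yields an associated graded complex identical to the original one; it does not degenerate to the classical wedge/contraction operator ``mode by mode.'' (Also, conformal weight and $\CC^*$-weight are different gradings, and fixed-weight pieces are not finite-dimensional---they contain arbitrary functions of the zero modes of the $b^i$.) The only grading the differential preserves is conformal weight \emph{plus} fermion number, and that is exactly the grading the paper's induction runs on.

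Second, the contracting homotopy that is supposed to kill the oscillator contributions is never constructed, and since the two pieces $y_{n+1}\theta$ and $P\,dy_{n+1}$ of $\alpha$ do not decouple, there is no evident way to produce it; this is precisely the hard point. The paper avoids needing any homotopy by a different mechanism. It first constructs explicit $Q$-closed corrected fields $\hat b^j,\hat\phi^j,\hat\psi_j,\hat a_j$ (Definition \ref{hatfields}): note that the naive $a_j$ is \emph{not} closed---the correction terms $-\sum_i(\partial_jP_i)P_n^{-1}\phi^i\psi_n-\frac12P_n^{-2}\partial_jP_n(P_n)'$ are essential---and their free-field OPEs are verified by Wick's theorem (Lemma \ref{hatOPE}). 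It then expresses $\hat L$ and $\hat J$ through the original fields modulo exact terms (Lemma \ref{hatLJ}). The payoff (Lemma \ref{sub}) is that the operator $H$ given by the zero mode of $L(z)-\partial_z J(z)$, whose eigenvalue is conformal weight plus fermion number, coincides on cohomology with the zero mode of $\hat L(z)-\partial_z\hat J(z)$; hence any cohomology class $v$ with $Hv=rv$, $r>0$, satisfies $v=\frac1r Hv$, and because of normal ordering $Hv$ is computed by first applying modes that strictly lower the eigenvalue, so induction places $v$ inside the hatted free-field subalgebra. The base case $r=0$ is a finite Koszul complex computation. Without the corrected fields and Lemma \ref{hatLJ} your argument has no mechanism at all for excluding extra cohomology, which is exactly the gap you flagged.
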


\begin{remark}
We are working in the holomorphic category, using strong topology. The analogous
statement in Zariski topology will be addressed in Remark \ref{GAGAtricky}.
\end{remark}

\begin{remark}
We identify vector bundles with their sheaves of holomorphic sections.
The weight one component of the pushforward to $Y$ of the 
sheaf of holomorphic $1$-forms on $W$ can be included into
a short exact sequence of locally free sheaves on $Y$
$$
0\to TY^\dual\otimes W^\dual \to (\pi_*TW^\dual)_{1}\to W^\dual\to 0.
$$
The global section $\alpha$ as above induces a global section of $W^\dual$.
Its zero set is precisely $X$. When the above sequence restricts to $X$, 
the section $\alpha\vert_X$ can be identified with a section of $TY^\dual\vert_X\otimes
 W^\dual\vert_X$,
which gives a map $TY\vert_X \to W^\dual\vert_X$. We assume that this map is surjective
and the kernel is the bundle $E$. Thus we have 
$$
0\to E\to TY\vert_X \to W^\dual\vert_X=N(X\subseteq Y)\to 0.
$$
Consequently, $c(E)=c(TX)$ and
the cohomological obstruction of \cite{gerbes2} vanishes.
However (as was pointed to us by Malikov),
it is still rather surprising that one can make a particular choice of the twisted chiral 
de Rham sheaf, distinguished from its possible regluings by elements of the cohomology group
$H^1(X,(\Lambda^2 TY^\dual)^{closed})$. In the case $E=TX$ such a choice 
exists by \cite{gerbes2,gerbes3} but there is no clear explanation for this phenomenon in general.
\end{remark}

The proof of Theorem \ref{tricky} proceeds in several steps. 
First, we calculate the cohomology with respect to 
${\rm Res}_{z=0}\alpha(z)$ for small conformal weights. Then we calculate the OPEs of the 
fields we have found to show that they satisfy the free bosons and free fermions OPEs of 
the twisted chiral de Rham sheaf. This implies that the corresponding Fock space sits inside 
the cohomology. Then we calculate the new $L$ and $J$ fields in terms of these 
free fields. Finally, we use induction on the sum of the conformal weight and the 
fermion number to show that the cohomology
algebra contains no additional fields.

\smallskip
We work in local coordinates as in \eqref{alpha}. We have the fields $\phi^i$,
$\psi_i$, $a_i$ as well as the fields $b^i$ that correspond to the variables $y_i$.
In these coordinates, we have
$$
{\rm Res}_{z=0}\alpha(z)={\rm Res}_{z=0}\Big(
\sum_{i=1}^n b^{n+1}(z)P_i({\bf b}(z))\phi^i(z)
+P({\bf b}(z))\phi^{n+1}(z)
\Big).$$
Observe that ${\rm Res}_{z=0}\alpha(z)$ has conformal weight $(-1)$ and fermion number $1$. There is also an additional integer grading by the acton of $\CC^*$ and this differential has weight $1$ with respect to it. Let us calculate the cohomology for small conformal weights. 

\begin{lemma}\label{hatb}
The cohomology sheaf of $\pi_*\MSV(W)$ with respect to ${\rm Res}_{z=0}\alpha(z)$
is supported on $X$. The conformal weight zero and fermion number zero subsheaf
is isomorphic to ${\mathcal O}_X$.
\end{lemma}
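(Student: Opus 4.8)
The plan is to work in the local coordinates of \eqref{alpha} and to exploit the bigrading of $\pi_*\MSV(W)$ by conformal weight $w\ge 0$ and fermion number $f:=\#\phi-\#\psi$. With respect to this bigrading the differential $Q:={\rm Res}_{z=0}\alpha(z)$ has bidegree $(-1,+1)$: it lowers $w$ by one (as already observed) and raises $f$ by one, since $\alpha(z)$ is built from a single $\phi$ and no $\psi$. First I would pin down the piece $V_{0,0}$ of conformal weight and fermion number zero. It carries no $a,\psi,\phi$ excitations and only the zero modes of $b^1,\dots,b^{n+1}$, so it is exactly $\pi_*\Oa_W=\Oa_Y[b^{n+1}]$, the functions on $W$ polynomial along the fibre. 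Since conformal weights are nonnegative, $Q$ maps $V_{0,0}$ into the zero space $V_{-1,1}$, and the only classes that can kill elements of $V_{0,0}$ come from $V_{1,-1}$. Hence the second assertion reduces to the identification $H_{0,0}=\Coker\bigl(Q\colon V_{1,-1}\to V_{0,0}\bigr)$.

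Next I would compute this image explicitly. The space $V_{1,-1}$ is generated by the weight-one states of fermion number $-1$, the simplest being $\psi_j$ dressed by functions of $b$, and the only way $Q$ can decrease the fermion number against such a state is to contract the $\phi$ of $\alpha(z)$ with a $\psi$ through the operator product $\phi^i(z)\psi_j(w)\sim\delta^i_j(z-w)^{-1}$. Contracting the term $P({\bf b})\phi^{n+1}$ against $\psi_{n+1}$ produces multiplication by $P({\bf b})$, while contracting $b^{n+1}P_i({\bf b})\phi^i$ against $\psi_i$ produces $b^{n+1}P_i({\bf b})$. Thus the image contains the ideal generated by $P$ and by the $b^{n+1}P_i$. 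The delicate point, which I expect to be the main obstacle, is to show that the image is exactly this ideal: one must carry out the residue and normal-ordering bookkeeping carefully and check that the remaining, bosonically dressed generators of $V_{1,-1}$ contribute nothing beyond it. This is a finite but fiddly mode computation in the $\beta\gamma$--$bc$ system.

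Granting the image computation, $H_{0,0}=\Oa_Y[b^{n+1}]/\bigl(P,\,b^{n+1}P_1,\dots,b^{n+1}P_n\bigr)$, and here the corank-one hypothesis enters. On $X=\{P=0\}$ the $P_i$ do not all vanish, so locally some $P_i$ is a unit; then the relation $b^{n+1}P_i$ forces $b^{n+1}$ into the ideal, all positive powers of $b^{n+1}$ die, and what remains is $\Oa_Y/(P)=\Oa_X$. Off $X$, where $P$ is a unit, the ideal is the whole ring and the quotient vanishes. This proves that the weight-zero fermion-number-zero subsheaf is $\Oa_X$.

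Finally, to see that the entire cohomology sheaf is supported on $X$, I would use that $Q$, being the zeroth mode of the field $\alpha(z)$, acts as an odd derivation of the normal-ordered product and annihilates the vacuum. Off $X$ the previous step exhibits the unit $1\in V_{0,0}$ as a coboundary, say $1=Q\xi$; then for any cocycle $\eta$ the derivation property gives $Q\bigl(\xi_{(-1)}\eta\bigr)=(Q\xi)_{(-1)}\eta=\eta$, so every cocycle is exact. Hence the cohomology vanishes identically away from $X$, which completes the proof.
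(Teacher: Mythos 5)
Your proof is correct and follows essentially the same route as the paper: identify the conformal weight zero, fermion number zero piece with functions on the total space, compute the image of the weight one, fermion number $(-1)$ piece (the vector fields $f({\bf b})\psi_j$) as the ideal $\bigl(P,\,b^{n+1}P_1,\dots,b^{n+1}P_n\bigr)=\bigl(P,\,b^{n+1}\bigr)$ using the no-common-zeros hypothesis, and conclude that the quotient is ${\mathcal O}_X$. Two small remarks: the step you flag as "delicate" is actually immediate, since weight one and fermion number $(-1)$ forces a state to be exactly of the form $\sum_j f_j({\bf b})\psi_j$ (any extra bosonic or fermionic mode raises the weight past one), so the image is precisely the contraction ideal with no further bookkeeping; and your explicit derivation-property argument for the support statement (the unit is exact off $X$, hence every cocycle $\eta=Q(\xi_{(-1)}\eta)$ is exact) spells out cleanly what the paper leaves implicit.
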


\begin{proof}
For an open set $U_Y\subseteq Y$, 
the subsheaf of $\pi_*{\rm MSV}(W)$ of conformal weight zero and fermion number zero 
is the sheaf of holomorphic functions on $\pi^{-1}U_Y$. The fields that can map
to it under ${\rm Res}_{z=0}\alpha(z)$
 are of conformal weight one and fermion number $(-1)$. These are linear combinations of 
$\psi$-s with 
coefficients that are functions in $b$-s, which correspond to the vector fields on 
$\pi^{-1}U_Y$.
The result of applying ${\rm Res}_{z=0}\alpha(z)$ amounts to pairing of $\alpha$ with that vector field. Thus the image is the ideal generated by $y_{n+1}P_i$ and $P$. Since $P$ and $P_i$ have no common zeroes,
this is the same as the ideal generated by $y_{n+1}$ and $P$. The quotient is then naturally isomorphic to
the sheaf of holomorphic functions on $U_X=U_Y\cap X$. 
\end{proof}

\begin{lemma}\label{hatpsi}
The conformal weight one and the fermion number $(-1)$ cohomology sheaf 
of $\pi_*\MSV(W)$ with respect to ${\rm Res}_{z=0}\alpha(z)$
is naturally isomorphic to the sheaf of sections of $E$.
\end{lemma}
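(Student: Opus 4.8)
The plan is to mimic the strategy of Lemma \ref{hatb}, but now in the weight one, fermion number $(-1)$ piece, and to identify the cohomology with sections of the corank-one subbundle $E=\ker(s^*\alpha)\subseteq TY\vert_X$. First I would describe the relevant subspaces of $\pi_*\MSV(W)$ explicitly. In conformal weight one and fermion number $(-1)$, the fields are linear combinations of the $\psi_i$ (including $\psi_{n+1}$) with coefficients that are holomorphic functions in the $b$'s; these correspond to holomorphic vector fields on $\pi^{-1}U_Y$. The differential ${\rm Res}_{z=0}\alpha(z)$ lands in conformal weight zero, fermion number zero, i.e.\ in holomorphic functions, and on a vector field it acts by contraction with $\alpha$. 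The source to control the kernel modulo image is the weight one, fermion number $(-1)$ piece together with the weight two, fermion number $(-2)$ piece mapping into it.

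The key computation is to take a general weight-one fermion-number-$(-1)$ element $\xi=\sum_{i=1}^n c^i({\bf b})\psi_i + c^{n+1}({\bf b})\psi_{n+1}$ and apply the differential. Using the local form \eqref{alpha} of $\alpha$, I expect ${\rm Res}_{z=0}\alpha(z)\,\xi$ to compute the pairing $\langle\alpha,\xi\rangle=\sum_i y_{n+1}P_i c^i + P\,c^{n+1}$, so the cocycle condition is exactly that this function vanish. The next step is to compute the image coming from weight two: the relevant source fields are of the schematic form $(\text{function})\,\psi_i\psi_j a_k$-type expressions and, crucially, $(\text{function})\,\partial_z\psi_i$ together with terms pairing two $\psi$'s against the two-form $d\alpha$. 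Modding out by this image restricts us to $X=\{P=0\}$ and, along $X$, sets the normal component of $\xi$ to be determined. The upshot I anticipate is that the cohomology is the sheaf of vector fields on $X$ annihilated by $s^*\alpha$, which by the exact sequence $0\to E\to TY\vert_X\to W^\dual\vert_X\to 0$ is precisely the sheaf of sections of $E$.

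The main obstacle will be the image calculation, i.e.\ pinning down exactly which weight-one fermion-number-$(-1)$ elements are coboundaries. Two subtleties arise: first, the coefficient $y_{n+1}=b^{n+1}$ entering $\alpha$ means that cocycles must be analyzed after restricting to the zero section locus governed by $P=0$, so I must track the interplay between the relations $y_{n+1}P_i$ and $P$ exactly as in Lemma \ref{hatb} (where the ideal generated by $y_{n+1}P_i$ and $P$ equals that generated by $y_{n+1}$ and $P$ because $P,P_i$ have no common zeroes). Second, $\partial_z\psi$ terms and the normal-ordering cocycle in \eqref{vertop} contribute to the differential at weight two, and I must verify that after taking cohomology the surviving classes are naturally identified with honest sections of $E$ rather than merely abstractly isomorphic; here I would use the corank-one surjectivity assumption ($P_i$ and $P$ not simultaneously zero) to guarantee that the contraction map $TY\vert_X\to W^\dual\vert_X$ is surjective with kernel $E$. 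Once the pairing description of the differential is established, the naturality of the identification with $E$ should follow from the coordinate-independence of the contraction $\langle s^*\alpha,-\rangle$.
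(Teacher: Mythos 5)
Your setup is correct: the conformal weight one, fermion number $(-1)$ piece is the space of vector fields on $\pi^{-1}U_Y$, the differential acts by contraction with $\alpha$, the cocycle condition is $\sum_i y_{n+1}P_ic^i+Pc^{n+1}=0$, and the answer is sections of $E$. But there are two genuine problems. First, you misidentify the space of coboundaries. In conformal weight two and fermion number $(-2)$ the only fields are $f({\bf b})\psi_i\psi_j$, i.e.\ bivector fields; your proposed sources $f\,\psi_i\psi_j a_k$ (that is weight three), $f\,\partial_z\psi_i$ (that is fermion number $(-1)$, and it maps to weight one, fermion number $0$, not into the spot in question), and any pairing against the two-form $d\alpha$ simply do not occur. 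The OPE of $\alpha(z)$ with a bivector field has only a first-order pole, so the residue acts by plain contraction with $\alpha$ and no derivative corrections appear; the image is exactly the submodule generated by $y_{n+1}P_i\partial_{n+1}-P\partial_i$ and $y_{n+1}(P_i\partial_j-P_j\partial_i)$.

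Second, and more seriously, the heart of the proof is missing: you never determine the kernel, and without it the phrase ``modding out by this image restricts us to $X$ and sets the normal component'' cannot be made precise. The paper's argument is a syzygy computation. Writing a cocycle as $\sum_{i=1}^{n}Q_i\partial_i+Q\partial_{n+1}$, one first observes that $Q$ is divisible by $y_{n+1}$ (since $P$ involves only $y_1,\ldots,y_n$ and is a nonzerodivisor modulo $y_{n+1}$), reducing the condition to $\sum_iP_iQ_i+\tilde QP=0$; then, because $P_1,\ldots,P_n,P$ have no common zeros, the Koszul complex is acyclic and the kernel is generated, over the functions on $\pi^{-1}U_Y$, by the explicit elements $P\partial_i-y_{n+1}P_i\partial_{n+1}$ and $P_j\partial_i-P_i\partial_j$. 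Only with these generators in hand can one form the quotient by the image above: the first family of generators dies, and what survives is the module of vector fields $\sum_iQ_i\partial_i$ with $\sum_iQ_iP_i=0$ taken modulo $P$ times this module, which is precisely the sheaf of sections of $E$ on $U_X$. Note that your appeal to the ideal-theoretic trick of Lemma \ref{hatb} (that the ideal $(y_{n+1}P_i,P)$ equals $(y_{n+1},P)$) only controls the weight-zero cohomology; at fermion number $(-1)$ one needs control of the module of \emph{relations} among $y_{n+1}P_1,\ldots,y_{n+1}P_n,P$ — the first Koszul homology — not of the ideal they generate, and that step is absent from your plan.
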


\begin{proof}
In the notations of the proof of Lemma \ref{hatb}, the 
conformal weight one and fermion number $(-1)$ subspace of 
$\pi_*\MSV(W)(U_Y)$  is the space of vector 
fields on $\pi^{-1}U_Y$. The kernel of the map consists of all fields which contract to 
$0$ by $\alpha$. These fields are given locally by $\sum_{i=1}^{n} Q_i\partial_i+Q\partial_{n+1}$ with 
\begin{equation}\label{1}\sum_i y_{n+1}P_iQ_i + QP =0.
\end{equation}
Here $Q_i,Q$ are functions of $(y_1,\ldots,y_{n+1})$. Observe that $Q$ is necessarily 
divisible by $y_{n+1}$, so we have $Q=y_{n+1}\tilde Q$ and 
$$
\sum_i P_iQ_i + \tilde Q P =0.
$$
Since the functions $P_i$ and $P$ have no common zeroes, the corresponding 
Koszul complex is acyclic, and the solutions to the above equation are generated,
as a module over the functions on $\pi^{-1}U_Y$, 
by $(Q_i=P,\tilde Q=-P_i)$, which corresponds
to $P\partial_i - y_{n+1}P_i\partial_{n+1}$ and $(Q_i = P_j,Q_j=P_i)$ which correspond to 
$P_j\partial_i-P_i\partial_j$ for $1\leq i,j\leq n$.

\smallskip
We need to take a quotient of this space by the image of the space of conformal weight two and fermion number $(-2)$. These are made from second exterior powers of the tangent bundle. The action is the contraction by $\alpha$. Consequently, the image is the submodule generated by 
\begin{equation}\label{2}
y_{n+1}P_i \partial_{n+1} - P\partial_i,~~y_{n+1}P_i\partial_j - y_{n+1}P_j\partial_i.
\end{equation}
Let us consider the quotient module. By using $y_{n+1}P_i \partial_{n+1} - P\partial_i$
we can reduce the quotient to the quotient of the module spanned by 
$P_j\partial_i-P_i\partial_j$ by the fields spanned by $y_{n+1}P_i\partial_j - y_{n+1}
P_j\partial_i$ as well as any linear combinations of the fields $y_{n+1}P_i\partial_{n+1}- P\partial_i$ which 
have no $\partial_{n+1}$. These are precisely the terms of the form
 $P$ times any linear combination of 
$P_j\partial_i-P_i\partial_j$. This means that we are taking the quotient of the space of
sections of the tangent bundle on $Y$ that satisfy $\sum_i Q_i P_i = 0$ by
$P$ times these sections. We observe that the result is precisely the sections
of the vector bundle $E$ on $U_X=X\cap U_Y$.
\end{proof}

Similarly we can handle the conformal weight zero and fermion number $1$ case.
\begin{lemma}\label{hatphi}
The conformal weight one and the fermion number $1$ cohomology sheaf 
of $\pi_*\MSV(W)$ with respect to ${\rm Res}_{z=0}\alpha(z)$
is naturally isomorphic to the sheaf of sections of $E^\dual$.
\end{lemma}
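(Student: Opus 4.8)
The plan is to mirror the computation in Lemma \ref{hatpsi}, but with one-forms in place of vector fields. As in the sentence preceding the statement, the relevant component has fermion number $1$ and the lowest possible conformal weight, namely zero: over an open set $U_Y\subseteq Y$ the weight zero, fermion number one part of $\pi_*\MSV(W)(U_Y)$ is the space of holomorphic one-forms $\sum_{i=1}^{n+1}\omega_i({\bf b})\phi^i$ on $\pi^{-1}U_Y$, with $\phi^i$ playing the role of $dy_i$. Since ${\rm Res}_{z=0}\alpha(z)$ lowers conformal weight by one and there are no fields of negative weight, every such form is a cocycle, so the cohomology is the quotient of this space by the image of the weight one, fermion number zero fields.

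First I would compute that image. Over functions of ${\bf b}$ the weight one, fermion number zero fields are generated by $a_j$, by $\partial_z b^j$, and by the quadratic fermions $\phi^i\psi_j$. Applying ${\rm Res}_{z=0}\alpha(z)$, the fields $\partial_z b^j$ and the pure functions contribute nothing, since $\alpha$ contains no $a$ or $\psi$ to contract with them; $a_{n+1}$ produces $\sum_{i=1}^n P_i\phi^i$; the fields $a_j$ for $j\leq n$ produce $y_{n+1}\sum_i\partial_jP_i\phi^i+\partial_jP\,\phi^{n+1}$; and $\phi^i\psi_j$ produces, up to sign, the $j$-th coefficient of $\alpha$ times $\phi^i$, i.e.\ $y_{n+1}P_j\phi^i$ for $j\leq n$ and $P\phi^i$ for $j=n+1$. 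Thus the image contains, with arbitrary holomorphic coefficients, the forms $\beta:=\sum_{i=1}^nP_i\phi^i$, the forms $P\phi^i$ and $y_{n+1}P_j\phi^i$, and the forms $y_{n+1}\sum_i\partial_jP_i\phi^i+\partial_jP\,\phi^{n+1}$.

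The heart of the argument is then a two-step reduction of an arbitrary one-form modulo this image, using exactly the two genericity hypotheses of the setup. Because $P$ and the $P_i$ have no common zero, the ideal $(P_1,\dots,P_n,P)$ is locally the unit ideal, and writing $y_{n+1}=y_{n+1}(\sum_jc_jP_j+cP)$ exhibits each $y_{n+1}\phi^i$ in the image (combining the $y_{n+1}P_j\phi^i$ and $P\phi^i$ generators); hence multiplication by $y_{n+1}$ vanishes on cohomology and we may set $y_{n+1}=0$, clearing all dependence on the fibre coordinate, exactly as the passage from $y_{n+1}P_i$ to $y_{n+1}$ in Lemma \ref{hatb}. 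Next, combining the $a_j$ generator with the now-exact $y_{n+1}$-terms shows $\partial_jP\,\phi^{n+1}$ lies in the image for every $j\leq n$; since $X=\{P=0\}$ is smooth, $dP$ is nowhere zero on $X$, so $(\partial_1P,\dots,\partial_nP)$ is the unit ideal on $X$ and $\phi^{n+1}$ itself becomes exact, clearing the $dy_{n+1}$ component. What remains is $\sum_{i=1}^n\omega_i(y_1,\dots,y_n)\phi^i$ modulo $(P)$ and modulo $\langle\beta\rangle$, which is precisely $\bigl(\bigoplus_{i=1}^n\Oa_X\,dy_i\bigr)/\langle\sum_iP_i\,dy_i\rangle$.

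Finally I would identify this quotient with the sheaf of sections of $E^\dual$. Dualizing the sequence $0\to E\to TY\vert_X\to W^\dual\vert_X\to 0$ of the earlier remark gives $0\to W\vert_X\to TY^\dual\vert_X\to E^\dual\to 0$, in which the image of the canonical section of $W\vert_X$ is exactly $\beta=\sum_iP_i\,dy_i$; so the displayed quotient is $E^\dual$. To see that the isomorphism is natural, and hence glues to the asserted sheaf isomorphism, I would note that the reduction map is nothing but pullback $s^*$ along the zero section followed by reduction modulo $\langle\beta\rangle$ on $X$: indeed $s^*dy_{n+1}=0$ and $s^*$ sets $y_{n+1}=0$, both coordinate-free operations, and $\beta$ is the intrinsic section $\alpha\vert_X$ of $TY^\dual\vert_X\otimes W^\dual\vert_X$. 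The main obstacle, as in Lemma \ref{hatpsi}, is purely the bookkeeping of the image: unlike the vector-field case the source of the differential is here the comparatively rich weight one, fermion number zero space, and the argument only closes once both hypotheses --- no common zero of $(P_i,P)$ and smoothness of $X$ --- are used to clear the $y_{n+1}$ and $\phi^{n+1}$ directions.
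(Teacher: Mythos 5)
Your proof is correct and follows essentially the same route as the paper's: you identify the weight-zero, fermion-number-one component with one-forms (correctly noting that, despite the lemma's wording, the relevant weight is zero), compute the image of the weight-one, fermion-number-zero fields ($a_j$, $a_{n+1}$, $\phi^i\psi_j$), use the no-common-zeros hypothesis on $(P_i,P)$ and smoothness of $X$ to kill $y_{n+1}$ and $\phi^{n+1}$, and finally quotient by $\sum_i P_i\phi^i$ to get $E^\dual$. Your last paragraph, identifying the quotient with $E^\dual$ via the dualized sequence $0\to W\vert_X\to TY^\dual\vert_X\to E^\dual\to 0$, merely makes explicit what the paper leaves implicit.
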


\begin{proof}
For conformal weight zero and fermion number $1$, we are looking at the quotient of the sheaf of 
differential one forms on $W$ by the image of ${\rm Res}_{z=0}\alpha(z)$ of the sheaf of fields of 
the form $f_i^j(b)\phi^i\psi_j + g^i(b)a_i$. The quotient by the image of the first kind of fields
is simply the fields of the restriction of $TW$ to $X$. Indeed, these are simply obtained by multiplying
the cokernel of the differential at conformal weight zero and fermion number zero by $\phi^i$.

\smallskip
For $1\leq j\leq n$ we have 
$${\rm Res}_{z=0}\alpha(z)a_j = \sum_{i}b^{n+1}\partial_j P_i \phi^i + 
\partial_j P \phi^{n+1}.$$
Since $b^{n+1}$ is trivial in the cohomology, we can reduce this to $\partial_j P \phi^{n+1}$. 
Since the functions $\partial_j P$ 
have no common zeroes (because $X$ is smooth), we see that $\phi^{n+1}$ lies in the image and 
is trivial in cohomology. It remains to take the quotient by the module
generated by  ${\rm Res}_{z=0}\alpha(z)a_{n+1}$. We have 
$$
{\rm Res}_{z=0}\alpha(z)a_{n+1}=\sum_{i=1}^n P_i\phi^i.
$$
Thus we see that the cohomology fields of this conformal weight and fermion number are 
naturally isomorphic to the sections of the dual bundle of $E$.
\end{proof}

\begin{remark}
The above calculations give us the fields that will correspond to the free fermions of 
$\MSV(X,E)$. Let us calculate their OPEs. Clearly, OPEs of the fields from Lemma
\ref{hatphi} 
with each other are trivial and similarly for the OPEs of the fields from Lemma \ref{hatpsi}.
Let us 
calculate the OPE of a field from Lemma \ref{hatphi} with a field from Lemma \ref{hatpsi}. 
We can take an old $\phi^j$ 
to be a representative of a field from Lemma \ref{hatphi}. Then its OPE with 
$\sum_i P_i \psi_i$ is 
$$
\sim \frac 1{z-w} P_i.
$$
Since the pairing between $E$ and $E^\dual$ is induced from pairing between
$TX$ and $TX^\dual$, we see that our new fields have the pairings
expected for the fields of $\MSV(X,E)$.
\end{remark}

Assume for a moment that $P_n\neq 0$ and $P=y_n$. Then the following fields 
will provide the generators of the cohomology with respect to
${\rm Res}_{z=0}\alpha(z)$. We will show that they always generate the cohomology
a bit later, in Lemma \ref{sub}. For now we will just study their OPEs.
\begin{definition}\label{hatfields}
For $1\leq j\leq n-1$ consider
$$
\begin{array}{c}
\hat b^j :=b^j,~\hat \phi^j:=\phi^j,~
\hat \psi_j:=\psi_j-P_jP_n^{-1}\psi_n,~
\\[.5em]
\hat a_j:=a_j-\sum_{i=1}^n (\partial_jP_i)P_n^{-1}\phi^i\psi_n
-\frac 12P_n^{-2}\partial_j P_n (P_n)'.
\end{array}
$$
Here $(P_n)'=\partial_zP_n$ refers to the differentiation with respect to the variable
on the world-sheet. Also in the $i=n$ term for the summation for $\hat a_j$
we implicitly assume normal ordering.
\end{definition}

\begin{lemma}\label{hatOPE}
The fields $\hat b^j,\hat a_j, \hat \phi^j,\hat \psi_j$
lie in the kernel of ${\rm Res}_{z=0}\alpha(z)$ and thus descend
to the cohomology.
We have 
$$
\hat a_j(z)\hat b^k(w)\sim \frac {\delta_{j}^k}{z-w},~~
\hat \phi^k(z)\hat \psi_j(w)\sim \frac {\delta_{j}^k}{z-w}
$$
with other OPEs nonsingular.
\end{lemma}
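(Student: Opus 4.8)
The plan is to regard $Q:={\rm Res}_{z=0}\alpha(z)$ as an odd derivation and to reduce every assertion to Wick contractions among the free fields $b^i,a_i,\phi^i,\psi_i$. First I would record the action of $Q$ on generators, which in these coordinates was essentially computed already in Lemmas \ref{hatb}--\ref{hatphi}: since $\alpha(z)$ involves only $b$- and $\phi$-fields one has $Q(b^i)=Q(\phi^i)=0$; pairing the $\phi$'s of $\alpha$ against $\psi$ gives $Q(\psi_j)=b^{n+1}P_j$ for $1\le j\le n$ and $Q(\psi_{n+1})=P=b^n$; and pairing $a_j$ against the $b$'s in the coefficients of $\alpha$ gives, using $P=b^n$ and $1\le j\le n-1$, $Q(a_j)=\sum_i b^{n+1}(\partial_jP_i)\phi^i$. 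The kernel claims for $\hat b^j=b^j$ and $\hat\phi^j=\phi^j$ are then immediate, and for $\hat\psi_j=\psi_j-P_jP_n^{-1}\psi_n$ one computes $Q(\hat\psi_j)=b^{n+1}P_j-P_jP_n^{-1}\,b^{n+1}P_n=0$, since $Q$ annihilates the $b$-valued coefficient $P_jP_n^{-1}$.

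For $\hat a_j$ the key observation is that $Q$ acts on the normal-ordered bilinear $(\partial_jP_i)P_n^{-1}\,\phi^i\psi_n$ through a single available contraction — the $\phi^n$ of $\alpha$ against $\psi_n$ — whose coefficient $b^{n+1}P_n$ is regular, so no contact term is produced and one obtains exactly $\sum_i b^{n+1}(\partial_jP_i)\phi^i$. This cancels $Q(a_j)$. The last term $-\tfrac12 P_n^{-2}(\partial_jP_n)(P_n)'$ is built from $b$-fields and their worldsheet derivatives, hence is separately $Q$-closed; so $Q(\hat a_j)=0$ and, notably, the $(P_n)'$-term is irrelevant to the kernel statement. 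I expect its role to surface only in the OPE computation below.

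Turning to the OPEs, the relations among $\hat b,\hat\phi,\hat\psi$ are immediate, the only singular one being $\hat\phi^k(z)\hat\psi_j(w)=\phi^k(z)(\psi_j-P_jP_n^{-1}\psi_n)(w)\sim\delta^k_j(z-w)^{-1}$, where the $\psi_n$ contribution drops out since $k\le n-1$. For $\hat a_j(z)\hat b^k(w)$ the two correction terms commute with $b^k$, leaving $a_j(z)b^k(w)\sim\delta^k_j(z-w)^{-1}$. The first nontrivial check is $\hat a_j(z)\hat\psi_k(w)$: here $a_j$ contracts with the $b$'s inside the coefficient $P_kP_n^{-1}$ of $\hat\psi_k$, producing a singular $\psi_n$-term $-\partial_j(P_kP_n^{-1})\psi_n$, and I would show this is cancelled term by term, via the product rule $\partial_j(P_kP_n^{-1})=(\partial_jP_k)P_n^{-1}-P_kP_n^{-2}(\partial_jP_n)$, by the two contractions of the $\phi^i\psi_n$ correction in $\hat a_j$ against the two $\psi$'s of $\hat\psi_k$ (with the matching fermionic signs). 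The shorter analogue settles $\hat a_j(z)\hat\phi^k(w)\sim0$.

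The main obstacle is to prove $\hat a_j(z)\hat a_k(w)$ nonsingular. The fermion bilinears admit a double contraction, $\phi^n(z)\psi_n(w)$ together with $\psi_n(z)\phi^n(w)$, which yields a genuine double pole with coefficient proportional to $P_n^{-2}(\partial_jP_n)(\partial_kP_n)$. Nothing among the $a$-$a$ or $a$-$\phi\psi$ contractions can cancel a double pole; the only other source is the contraction of $a_j$ (resp. $a_k$) against the worldsheet derivative $(P_n)'$ in the third term of $\hat a_k$ (resp. $\hat a_j$), which, because $a_j(z)\,\partial_w b^m(w)\sim\delta^m_j(z-w)^{-2}$, produces exactly a compensating double pole; the factor $\tfrac12$ is present precisely because there are two such symmetric contributions, so that their sum cancels the fermionic double pole. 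It then remains to collect all residual single poles — from the Taylor tails of the double contraction, from the $a$-against-coefficient contractions, and from the $a$-against-$(P_n)'$ contraction — and verify they sum to zero; this bookkeeping of fermionic signs and Taylor expansions I expect to be the most laborious part, although it is forced once the double pole is arranged to vanish. Establishing these OPEs exhibits $\hat b^j,\hat a_j,\hat\phi^j,\hat\psi_j$ as a rank-$(n-1)$ free $bc$-$\beta\gamma$ system inside the cohomology, which is the assertion.
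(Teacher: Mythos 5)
Your overall approach is the same as the paper's: record the action of $Q={\rm Res}_{z=0}\alpha(z)$ on the generators, use that $Q$ is an odd derivation of all products, and check the OPEs by Wick's theorem. Your treatment of the kernel statements, of $\hat a_j(z)\hat b^k(w)$, $\hat\phi^k(z)\hat\psi_j(w)$ and $\hat a_j(z)\hat\phi^k(w)$, and the product-rule cancellation in $\hat a_j(z)\hat\psi_k(w)$ coincides with the paper's calculations; and your explanation of the term $-\frac 12P_n^{-2}(\partial_jP_n)(P_n)'$ — two symmetric $a$-against-$(P_n)'$ double poles cancelling the double fermionic contraction, which is where the factor $\frac 12$ comes from — is exactly the mechanism behind the paper's equation \eqref{order2}.

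There is, however, one genuine gap: you assert that the vanishing of the residual $(z-w)^{-1}$ terms in $\hat a_j(z)\hat a_k(w)$ is ``forced once the double pole is arranged to vanish,'' and on that ground you do not carry out the computation. This is not forced. Because the double-pole coefficient \eqref{order2} vanishes only at $z=w$ rather than identically, its Taylor tail feeds into the first-order pole (the paper's \eqref{blah1}), and the only general principle available — vertex-algebra skew-symmetry applied to $\hat a_j$, $\hat a_k$ once all poles of order at least two cancel — yields merely that the residual single pole is antisymmetric under $j\leftrightarrow k$: this forces vanishing for $j=k$ but says nothing for $j\neq k$. The actual cancellation depends on the precise coefficients of the correction terms in Definition \ref{hatfields}: the tail \eqref{blah1} must cancel against the subleading terms of the $a$-against-$(P_n)'$ contractions \eqref{blah2} and \eqref{blah3}, and the remaining single poles — from $a_j$, $a_k$ hitting the $b$-dependent coefficients, and from \emph{single} fermionic contractions between the two $\phi\psi$ bilinears, a source your list omits entirely — must cancel among themselves, which is the content of the final displayed identity in the paper's proof. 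This is what the paper calls ``by far the most complicated calculation''; without performing it (or giving a genuine structural reason for it), the lemma is not proved.
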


\begin{proof} 
These are routine calculations using Wick's theorem for OPEs of products
of free fields.
It is important to use $\partial_j P= \partial_j y_n=0$
for $j$ in the above range. 

\smallskip
We will do the more tricky of the calculations and leave the rest to the reader.
For example, let us calculate the OPE of $\hat a_j(z)$ and $\alpha(w)$.
We have 
$$
\begin{array}{rl}
\hat a_j(z)\alpha(w)\sim& \Big(a_j(z)-\sum_{i=1}^n (\partial_jP_i)P_n^{-1}\phi^i(z)\psi_n(z)\Big)
\\&\Big(\sum_{k=1}^nb^{n+1}(w)P_k(w)\phi^k(w)+b^n(w)\phi^{n+1}(w)\Big)
\\
\sim &(z-w)^{-1}\Big(
\sum_{k=1}^n b^{n+1}\partial_jP_k\phi^k
\\&
-\sum_{i,k=1}^n(\partial_jP_i)P_n^{-1}b^{n+1}P_k\phi^i\delta_{n}^k
\Big)
\sim 0.
\end{array}
$$
The OPE of $\hat a$ and $\hat \psi$ fields is computed as follows.
$$
\begin{array}{rl}
\hat a_j(z)\hat \psi_k(w) \sim&
\Big(a_j(z)-\sum_{i=1}^n (\partial_jP_i)(z)P_n^{-1}(z)\phi^i(z)\psi_n(z)\Big)\\
&
\Big(\psi_k(w)-P_k(w)P_n^{-1}(w)\psi_n(w)\Big)
\\
\sim &
(z-w)^{-1}\Big(-\partial_j(P_kP_n^{-1})\psi_n+
(\partial_jP_k)P_n^{-1}\psi_n
\\ &-P_n^{-1}(\partial_jP_n)P_kP_n^{-1}\psi_n\Big)
\sim 0.\end{array}
$$
In the above calculations we ignored the dependence of the terms 
of the coefficient at $(z-w)^{-1}$ on $z$ versus $w$, since the difference 
is nonsingular.

\smallskip
By far the most complicated calculation is
the OPE of $\hat a_j(z)\hat a_k(w)$.
This OPE has poles of order two at $z=w$. We need to be careful
with the second order terms to include the dependence on the variables.
The coefficient at $(z-w)^{-2}$ is coming from the double pairings of the $\phi^n\psi_n$ terms
and the pairing between the $a$-s and the $(P_n)'$ terms.
It is given by
\begin{equation}\label{order2}
\begin{array}{c}
(\partial_j P_n)(z)P_n^{-1}(z)(\partial_k P_n)(w)P_n^{-1}(w)\\[1em]
-\frac 12 P_n^{-2}(w)(\partial_k P_n)(w)(\partial_jP_n(w)) 
-\frac 12 P_n^{-2}(z)(\partial_jP_n(z))(\partial_kP_n(z))
\end{array}
\end{equation}
The above expression is zero at $z=w$. However, these pairings contribute to the 
coefficient by $(z-w)^{-1}$. Specifically, \eqref{order2} contributes
\begin{equation}\label{blah1}
\begin{array}{c}
\partial_w ((\partial_j P_n)P_n^{-1}) (\partial_k P_n)P_n^{-1}
-\frac 12\partial_w(P_n^{-2}(\partial_jP_n)(\partial_kP_n))\\[.5em]
=\frac 12P_n^{-2} (\partial_k P_n)(\partial_jP_n)'
-\frac 12 P_n^{-2}(\partial_jP_n)(\partial_kP_n)'.
\end{array}
\end{equation}
Note that the pairing between $a_j(z)$ and $-\frac 12P_n^{-2}(w)\partial_k 
P_n(w) (P_n)'(w)$
additionally contributes to $(z-w)^{-1}$ term as follows. We have
\begin{equation}\label{blah2}
\begin{array}{c}
-\frac 12P_n^{-2}(w)(\partial_k P_n)(w) \partial_w((z-w)^{-1}\partial_jP_n(w))\sim
\\[.5em]
\hskip-5pt
-\frac 12P_n^{-2}(w)(\partial_kP_n)(w)\partial_jP_n(w)(z-w)^{-2}
\hskip -3.5pt
-\frac 12P_n^{-2}(\partial_kP_n)(\partial_jP_n)'(z-w)^{-1}
\end{array}
\end{equation}
of which only the first term was accounted for in \eqref{order2}.
Similarly, the OPE of $-\frac 12P_n^{-2}(z)\partial_j P_n(z)  (P_n)'(z)$
and $a_k(w)$ will yield
\begin{equation}\label{blah3}
\begin{array}{c}
\frac12 P_n^{-2} (z) (\partial_j P_n)(z)\partial_z((z-w)^{-1}(\partial_kP_n)(z))
\sim
\\[.5em]
\hskip -6pt
-\frac12 P_n^{-2}(z)(\partial_jP_n)(z)(\partial_kP_n)(z)(z-w)^{-2}
\hskip -3.5pt
+\frac12 P_n^{-2}(\partial_jP_n)(\partial_kP_n)'(z-w)^{-1}
\end{array}
\end{equation}
of which the second term is not accounted for in \eqref{order2}.
Note that the second terms of \eqref{blah2} and \eqref{blah3} cancel 
the contribution of \eqref{blah1}.

\smallskip
There are additional contributions to the $(z-w)^{-1}$ term of the OPE
that come from other pairings in the Wick's theorem. 
We need
to consider the pairings of $a_i$ and $a_k$ with the 
functions of $b$-s. We also need to consider the results of pairings
of $\phi^n\psi_n$ terms with $\phi^i\psi_n$ terms.
The coefficient at $(z-w)^{-1}$ is then calculated to be
\begin{equation}
\begin{array}{c}
\sum_{i=1}^n \partial_k((\partial_jP_i)P_n^{-1})\phi^i\psi_n
-\sum_{i=1}^n\partial_j((\partial_kP_i)P_n^{-1})\phi^i\psi_n
\\[.5em]
-\sum_{i=1}^n (\partial_jP_n)P_n^{-2} \partial_k P_i\phi^i\psi_n
+\sum_{i=1}^n (\partial_kP_n)P_n^{-2} \partial_j P_i\phi^i\psi_n
\\[.5em]
+\frac 12\partial_k(P_n^{-2}\partial_jP_n)(P_n)'
-\frac 12\partial_j(P_n^{-2}\partial_kP_n)(P_n)'
=0.\end{array}
\end{equation}
\end{proof}

In the next lemma we will calculate a Virasoro and the $U(1)$ current
fields for the fields of Definition \ref{hatfields}.
\begin{definition}\label{defhatLJ}
Define 
$$
\hat J:= \sum_{j=1}^{n-1}\hat \phi^j\hat\psi_j,~~\hat L : = \sum_{j=1}^{n-1}
(\hat b^j)'\hat a_j + \sum_{j=1}^{n-1}(\hat \phi^j)'\hat \psi_j.
$$
where we are using the normal ordering from the modes of the free $~\hat{}~$ fields.
\end{definition}

\begin{lemma}\label{hatLJ}
We have the following equalities in the cohomology of $\MSV(U_W)$ 
by ${\rm Res}_{z=0}\alpha(z)$
$$
\hat J=\sum_{j=1}^{n+1} \phi^j\psi_j  
-\Big(b^{n+1}a_{n+1}+ \phi^{n+1}\psi_{n+1}\Big)-(\ln P_n)'
$$
$$
\hat L = \sum_{j=1}^{n+1}(b^j)' a_j + \sum_{j=1}^{n+1} (\phi^j)'\psi_j
+\frac 12 (\ln P_n)''-\Big(b^{n+1}a_{n+1}+ \phi^{n+1}\psi_{n+1}\Big)'
$$
where on the right hand side we are using the normal ordering with respect 
to the free fields on $\pi_*\MSV(W)$.
\end{lemma}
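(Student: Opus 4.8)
The plan is to prove both identities by a single recipe: substitute Definition \ref{hatfields} into Definition \ref{defhatLJ}, expand each normal-ordered product by Wick's theorem while keeping \emph{all} finite reordering terms coming from second-order poles, and then reduce the outcome modulo the image of ${\rm Res}_{z=0}\alpha(z)$. The coboundary relations I would use are exactly those read off from the proofs of Lemmas \ref{hatb}--\ref{hatphi}: one has ${\rm Res}_{z=0}\alpha(z)\,a_{n+1}=\sum_{k=1}^{n}P_k\phi^k$, ${\rm Res}_{z=0}\alpha(z)\,a_n=\sum_{k}b^{n+1}(\partial_nP_k)\phi^k+\phi^{n+1}$, ${\rm Res}_{z=0}\alpha(z)\,a_j=\sum_{k}b^{n+1}(\partial_jP_k)\phi^k$ for $j\le n-1$, and ${\rm Res}_{z=0}\alpha(z)\,\psi_j=b^{n+1}P_j$ for $j\le n$, $=b^n$ for $j=n+1$; in particular $b^{n+1}$, $b^n$ and $\phi^{n+1}$ are exact. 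Because ${\rm Res}_{z=0}\alpha(z)$ is the zero mode of $\alpha$, it is an odd derivation of the normal-ordered product, so all of these may be combined freely by the graded Leibniz rule, and it annihilates every function of the $b$'s.

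For $\hat J$ I would substitute $\hat\phi^j=\phi^j$, $\hat\psi_j=\psi_j-P_jP_n^{-1}\psi_n$ and observe that, since the only singular contraction between $\hat\phi^j$ and $\hat\psi_j$ is the $\phi^j\psi_j$ pole, the hatted and the original normal orderings agree term by term, giving $\hat J=\sum_{j=1}^{n}\phi^j\psi_j-P_n^{-1}\big(\sum_{k=1}^{n}P_k\phi^k\big)\psi_n$. The second summand is $P_n^{-1}\psi_n$ times the exact field $\sum_kP_k\phi^k={\rm Res}_{z=0}\alpha(z)\,a_{n+1}$, so applying the Leibniz rule to ${\rm Res}_{z=0}\alpha(z)\,(a_{n+1}P_n^{-1}\psi_n)$ rewrites it, modulo coboundaries, in terms of the fiber field $b^{n+1}a_{n+1}$; the nonconstant coefficient $P_n^{-1}$ is what forces the additional logarithmic term $(\ln P_n)'$. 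After inserting and cancelling the $\phi^{n+1}\psi_{n+1}$ contribution, which is trivial on the nose, this is precisely the asserted formula for $\hat J$.

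The field $\hat L$ is the main computation. Substituting Definition \ref{hatfields}, the normal-ordered products $(b^j)'\hat a_j$ and $(\phi^j)'\hat\psi_j$ now contain the cubic correction $\sum_i(\partial_jP_i)P_n^{-1}\phi^i\psi_n$ and the explicit world-sheet-derivative term $-\tfrac12P_n^{-2}(\partial_jP_n)(P_n)'$ of $\hat a_j$, and these generate double contractions of exactly the type already met in the proof of Lemma \ref{hatOPE}. I would collect the second-order poles first; their finite residues, together with the explicit $(P_n)'$ term summed against $(b^j)'$ and with $(P_n)'=\sum_l(\partial_lP_n)(b^l)'$, assemble into the improvement term $\tfrac12(\ln P_n)''$. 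The remaining first-order data is then reduced modulo the image of ${\rm Res}_{z=0}\alpha(z)$ using the four coboundary formulas above: this promotes the partial sum $\sum_{j=1}^{n-1}$ to the full $\sum_{j=1}^{n+1}(b^j)'a_j+(\phi^j)'\psi_j$ and produces the subtraction $-\big(b^{n+1}a_{n+1}+\phi^{n+1}\psi_{n+1}\big)'$ corresponding to the removal of the fiber and normal directions.

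The hard part will be the bookkeeping of the second-order poles in the $\hat L$ computation, namely getting the coefficient $\tfrac12$ and the sign of the $\ln P_n$ anomaly right, since, just as in Lemma \ref{hatOPE}, several separate double contractions contribute to the same $(z-w)^{-1}$ coefficient and must be shown to combine into a total $z$-derivative. As an independent check I would verify that the two right-hand sides are annihilated by ${\rm Res}_{z=0}\alpha(z)$, a computation of the same flavor as Proposition \ref{3.5}, and that they act on the generators $\hat b^j,\hat a_j,\hat\phi^j,\hat\psi_j$ of Lemma \ref{hatOPE} as the Virasoro field and the $U(1)$ current of the free $~\hat{}~$ fields do; by the reconstruction property this pins down the classes uniquely and confirms the stated equalities in cohomology.
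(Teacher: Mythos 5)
Your proposal is correct and takes essentially the same route as the paper's proof: both trade the cross-term $\sum_{j}P_jP_n^{-1}\phi^j\psi_n$ in $\hat J$ for $b^{n+1}a_{n+1}$ plus the anomaly $(\ln P_n)'$ using the coboundary ${\rm Res}_{z=0}\alpha(z)\big(P_n^{-1}a_{n+1}\psi_n\big)$, whose second-order pole (with $z$-dependent coefficient $P_n$) is exactly where the logarithmic term comes from. For $\hat L$ the paper likewise reduces modulo images of composite fields (it exhibits $P_n^{-1}\psi_na_{n+1}'+\sum_{j}(\partial_nP_j)P_n^{-1}\phi^j\psi_n\psi_{n+1}'-\psi_{n+1}'a_n$ and leaves the Wick bookkeeping to the reader), which matches your plan of collecting the second-order-pole contributions into $\frac 12(\ln P_n)''$ and absorbing the remaining terms into coboundaries.
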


\begin{proof}
Let us first try to calculate $\hat J$. To calculate the normal ordered products, 
we subtract the singular terms of the OPEs to get 
$$
\hat J = \sum_{j=1}^{n-1} \hat \phi^j\hat \psi_j =
\sum_{j=1}^{n-1} \phi^j\psi_j  - \sum_{j=1}^{n-1}P_jP_n^{-1}\phi^j\psi_n
$$
$$=\sum_{j=1}^{n+1} \phi^j\psi_j - \phi^{n+1}\psi_{n+1} -  \sum_{j=1}^{n}P_jP_n^{-1}\phi^j\psi_n.
$$
Consider 
$$
\begin{array}{c}
\alpha(z)P_n^{-1}(w)a_{n+1}(w)\psi_n(w)
\\[.5em]
=(\displaystyle\sum_{i=1}^n b^{n+1}(z)P_i(z)\phi^i(z)+b^n(z)\phi^{n+1}(z))
P_n^{-1}(w)a_{n+1}(w)\psi_n(w)
\\[.5em]
\sim
-(z-w)^{-2}P_n(z)P_n^{-1}(w)
+(z-w)^{-1}(b^{n+1}a_{n+1}-\displaystyle\sum_{i=1}^n P_i\phi^iP_n^{-1}\psi_n)
\\[.5em]
\sim -(z-w)^{-2} +(z-w)^{-1}
(b^{n+1}a_{n+1}-\displaystyle\sum_{i=1}^n P_i\phi^iP_n^{-1}\psi_n-P_n^{-1}P_n').
\end{array}
$$
Thus,
$${\rm Res}_{z=0}\alpha(z)(P_n^{-1}a_{n+1}\psi_n)
=b^{n+1}a_{n+1}-\sum_{j=1}^n P_jP_n^{-1}\phi^j\psi_n-P_n^{-1}P_n',
$$
so the field $\hat J$ 
is equivalent to $\sum_{j=1}^{n+1} \phi^j\psi_j - \phi^{n+1}\psi_{n+1}-b^{n+1}a_{n+1}
-(\ln P_n)'$.

\smallskip
The calculation for $\hat L$ is similar though more complicated. 
The difference between it and the right hand side of Lemma \ref{hatLJ}
turns out to equal the image under ${\rm Res}_{z=0}\alpha(z)$ of the 
field
$$
P_n^{-1}\psi_na_{n+1}' + \sum_{j=1}^n(\partial_n P_j)P_n^{-1}\phi^j\psi_n\psi_{n+1}'
-\psi_{n+1}'a_n.
$$
Details are left to the reader.
\end{proof}

In the following lemma, we will show that the free fields $\hat b^j,\hat \phi^j,
\hat \psi_j,\hat a_j$ locally generate the cohomology of ${\rm MSV}(W)$ by 
${\rm Res}_{z=0}\alpha(z)$. 

\begin{lemma}\label{sub}
Let $x\in X$ be a point. Pick a small open subset $U_X\subset X$ containing $x$.
We can pick coordinates on $Y$ such that $y_n=P$. By changing $y_i$ to $y_i+y_n$ 
and possibly shrinking $U_X$ we can also assume that $P_n \neq 0$ on $U_X$. 
Pick $U_Y$ to be an open subset on $Y$ with $U_Y\cap X =U_X$
and denote by $U_W$ the preimage of $U_Y$ in $W$. Then the cohomology
of ${\rm MSV}(U_W)$ with respect to ${\rm Res}_{z=0}\alpha(z)$ is generated by 
the $4(n-1)$  free fields $\hat b^j,\hat \phi^j,
\hat \psi_j,\hat a_j$, for $1\leq j\leq n-1$.
\end{lemma}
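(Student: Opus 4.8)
The plan is to promote the free fields of Definition~\ref{hatfields} to a full generating set for $\MSV(U_W)$ and then to show that the differential is acyclic in the complementary directions. First I would adjoin to the $4(n-1)$ hat fields the eight fields in the directions $n$ and $n+1$, namely $b^n,a_n,\phi^n,\psi_n$ and $b^{n+1},a_{n+1},\phi^{n+1},\psi_{n+1}$. Because $P_n\neq 0$ on $U_X$ and the substitutions of Definition~\ref{hatfields} are triangular (each hat field is the corresponding original field plus a correction assembled from the direction-$n$ and direction-$(n+1)$ fields), this is an invertible change of free-field generators for $\MSV(U_W)$; by Lemma~\ref{hatOPE} the hat fields already lie in the kernel of ${\rm Res}_{z=0}\alpha(z)$, so the entire content of the lemma is that the eight extra fields contribute nothing new to the cohomology.

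Next I would extract the Koszul part of the differential. Using $P=y_n=b^n$, the leading action of ${\rm Res}_{z=0}\alpha(z)$ on the extra generators is
\begin{equation*}
\psi_{n+1}\mapsto b^n,\qquad \psi_n\mapsto P_n\,b^{n+1},\qquad a_n\mapsto \phi^{n+1},\qquad a_{n+1}\mapsto P_n\,\phi^n,
\end{equation*}
with $b^n,b^{n+1},\phi^n,\phi^{n+1}$ annihilated. I would then filter $\MSV(U_W)$ so that these leading maps constitute the associated graded differential $d_0$, the higher filtration absorbing the remaining terms --- the higher Taylor coefficients of the $P_i$ in $b^n$, the terms carrying a $b^{n+1}$ factor, the normal-ordering and derivative corrections, and in particular the mixing corrections built into $\hat a_j$ and $\hat\psi_j$. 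On the associated graded the hat fields and the extra fields decouple, and $d_0$ acts only on the extra factor, pairing the four weight-one ``momentum'' fields with the four weight-zero ``coordinate'' fields. Since $P_n$ is invertible, $d_0$ is a contractible differential whose cohomology, computed mode by mode on each of the four pairs by the standard acyclicity of such free-field pairs used in \cite{Borvert,MSV}, is trivial. Hence the $E_1$ page of the resulting spectral sequence is exactly $\hat\Fa$.

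To finish I would show the spectral sequence collapses onto $\hat\Fa$, which is where the stated induction on $w+f$ does its work and which I would run explicitly rather than invoke abstract convergence. Every generator of $\hat\Fa$ is a genuine ${\rm Res}_{z=0}\alpha(z)$-cocycle by Lemma~\ref{hatOPE}, and since the differential is an odd derivation it annihilates all of $\hat\Fa$; so there is no room for higher differentials out of $E_1$. Concretely, the four coordinate fields are honest coboundaries --- one has $b^n={\rm Res}_{z=0}\alpha(z)\,\psi_{n+1}$, $b^{n+1}={\rm Res}_{z=0}\alpha(z)(P_n^{-1}\psi_n)$, and analogous expressions for $\phi^{n+1}$ and $\phi^n$ modulo the hat fields $\hat\phi^k=\phi^k$ --- so in any cocycle each occurrence of such a field can be traded, via the Leibniz rule, for a coboundary plus the singular OPE terms arising when $\alpha$ is contracted against the rest of the monomial. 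Those OPE contractions strictly lower $w+f$, so by the inductive hypothesis they already lie in $\hat\Fa$; since $\hat\Fa$ is a vertex subalgebra, the products remain in $\hat\Fa$ and the reduction terminates there. The base of the induction is supplied by the low-weight identifications of Lemmas~\ref{hatb}, \ref{hatpsi} and \ref{hatphi}. Combined with the injection $\hat\Fa\hookrightarrow H^*$, this yields that the cohomology of $\MSV(U_W)$ with respect to ${\rm Res}_{z=0}\alpha(z)$ equals $\hat\Fa$.

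The hard part will be the bookkeeping of the non-leading terms $d-d_0$. One must verify that the chosen filtration genuinely relegates the $b^{n+1}$-coefficient terms and the Taylor tails of the $P_i$ to strictly higher filtration, and, more delicately, that the OPE-contraction corrections produced while clearing the extra fields really drop $w+f$ and do not reintroduce extra fields at the same degree. The normal-ordering terms in $\hat a_j$ --- already responsible for the intricate second-order computation in Lemma~\ref{hatOPE} --- are exactly what obstructs a one-step argument and forces the induction.
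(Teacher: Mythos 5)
Your first two steps are sound and essentially reproduce, in spectral-sequence language, what the paper's proof does at degree zero: the hat fields of Definition \ref{hatfields} form a free-field subalgebra of the cohomology by Lemma \ref{hatOPE}, and the extra directions are governed by a Koszul differential built on the regular sequence $(y_{n+1}P_n,\,y_n)$ --- your $d_0$, with $\psi_{n+1}\mapsto b^n$, $\psi_n\mapsto P_n b^{n+1}$, $a_n\mapsto\phi^{n+1}$, $a_{n+1}\mapsto P_n\phi^n$, is exactly that structure. The genuine gap is in the collapse step, and it is not just unfinished bookkeeping: the explicit induction you offer in its place rests on a false degree count. The differential ${\rm Res}_{z=0}\alpha(z)$ has conformal weight $-1$ and fermion number $+1$, so it \emph{preserves} $w+f$; likewise the zero modes of the weight-zero fields (multiplication by $y_n$ and $y_{n+1}$, and the modes $\phi^n[0]$, $\phi^{n+1}[0]$) preserve $w+f$. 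Hence the correction terms produced when you ``trade an occurrence for a coboundary'' do \emph{not} strictly lower $w+f$: for instance $\phi^{n+1}={\rm Res}_{z=0}\alpha(z)\,a_n-\sum_i b^{n+1}(\partial_nP_i)\phi^i$, and the leftover reintroduces $b^{n+1}$ and $\phi^n$ at the very same $w+f$; clearing those reintroduces $\phi^{n+1}$, and the process cycles rather than terminates. In particular the entire $w+f=0$ part --- the polyvector fields, which contain \emph{arbitrary} powers of $y_n$ and $y_{n+1}$ --- can never be reached by induction on $w+f$; it is killed only by the regular-sequence Koszul argument, which your induction abandons. Separately, the filtration whose associated graded differential is $d_0$ is never constructed (you flag this yourself), and since powers of $y_{n+1}$ are unbounded in each conformal weight one must also verify boundedness or completeness for the spectral sequence to converge; this is repairable (e.g.\ filter by the number of modes of $a_n,a_{n+1},\psi_n,\psi_{n+1}$, which is bounded in each conformal weight because these fields have weight one), but it is not done.

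The missing idea, which is how the paper handles all positive degrees at once, is Lemma \ref{hatLJ}: the Hamiltonian $H$, the $z^{-2}$ coefficient of $L(z)-J(z)'$, coincides in cohomology with the $z^{-2}$ coefficient of $\hat L(z)-\hat J(z)'$, i.e.\ it can be rewritten purely in normal-ordered modes of the hat fields as in \eqref{no}. Then for a cohomology class $v$ with $Hv=rv$, $r>0$, one writes $v=\frac 1r Hv$; normal ordering applies the degree-lowering hat modes first, which by induction send $v$ into the hat subalgebra, and the degree-raising hat modes keep it there. This single identity replaces your entire collapse argument, and the Koszul computation is needed only for the base case $w+f=0$. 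Your proposal never invokes $\hat L$, $\hat J$, or Lemma \ref{hatLJ}, and that is precisely the ingredient that makes an induction on $w+f$ legitimate.
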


\begin{proof}
From Lemma \ref{hatOPE} we see that the above fields generate a subalgebra of 
the cohomology. Since we have a description of the cohomology of the conformal
weight zero piece as the functions on $U_X$, we see that their OPEs imply that 
this subalgebra is the usual Fock space representation, namely polynomials 
in negative modes of $\hat b$, $\hat a$, $\hat \psi$ and nonpositive modes of 
$\hat\phi$, tensored with functions on $U_X$ for the zero modes of $\hat  b$.

\smallskip
Let us show that there are no additional cohomology elements. We will first 
handle the part of the cohomology where the fermion number plus the 
conformal weight of $\pi_* \MSV(W)$ is zero. This is the cohomology of 
the algebra of polyvector fields on $U_W$ with respect to the contraction by $\alpha$.
We have already seen this at fermion number $(-1)$ in Lemma \ref{hatpsi}.
This is a Koszul complex for the ring ${\mathcal O}(U_W)$
and functions $y_{n+1}P_i$ and $y_n$. We can think of it as an exterior algebra
over the ring of ${\mathcal O}(U_W)$ of the vector space with the basis
$\hat\psi_j, 1\leq j\leq n-1$, $\psi_n$, $\psi_{n+1}$. Then we have the Koszul 
complex for $y_{n+1}P$ and $y_n$ for the ring ${\mathcal O}(U_W)$ tensored
with the exterior algebra in $\hat\psi_j$. It remains to observe that this 
Koszul complex has cohomology only at the degree zero term which 
is equal to ${\mathcal O}(U_X)$.

\smallskip
We will proceed by induction on the conformal weight plus fermion number.
Conformal weight plus fermion number is simply the eigenvalue of the operator $H$ 
which is the coefficient of
$z^{-2}$ of $L(z)-J(z)'$. By Lemma \ref{hatLJ}, this operator $H$ is equal to the $z^{-2}$
coefficient 
of $\hat L(z)-\hat J(z)'$. We can write 
$$
\hat b^j(z)=\sum_{n\in\ZZ}\hat b^j[n]z^{-n},~
\hat a_j(z)=\sum_{n\in\ZZ}\hat a_j[n]z^{-n-1},~
$$$$
\hat \phi^j(z)=\sum_{n\in\ZZ}\hat \phi^j[n]z^{-n-1},~
\hat \psi_j(z)=\sum_{n\in\ZZ}\hat \psi_j[n]z^{-n},~
$$
where the endomorphisms with index $[n]$ change the $H$-degree
of homogeneous elements   by $(-n)$.
We have 
\begin{equation}\label{no}
\begin{array}{rl}
H=& \sum_{n\in \ZZ_{>0}} \sum_j (-n)\hat a_j[-n]\hat b^j[n] + \sum_{n\in\ZZ_{<0}} \sum_j 
(-n)\hat b^j[n]\hat a_j[-n]
\\[.5em]
&+\sum_{n\in\ZZ_{>0}} \sum_j (-n)\hat \phi^j[-n]\hat \psi_j[n]
-\sum_{n\in\ZZ_{<0}} \sum_j (-n)\hat \psi_j[n]\hat \phi^j[n].
\end{array}
\end{equation}

\smallskip 
Suppose we have proved the statement of the lemma
for all eigenvalues of $H$ that are less than some positive integer $r$.
If an element $v$ of the cohomology of $\MSV(U_W)$ with respect
to ${\rm Res}_{z=0}\alpha(z)$ has positive $H$-eigenvalue $r$,
then we have $v=\frac 1r Hv$.
Because of the normal ordering, when calculating $Hv$ as in \eqref{no} one is applying first
the modes that decrease the eigenvalue of $H$ and thus by induction
send $v$ into the subalgebra generated by the $\hat{}$ fields. Thus $Hv$ lies in this algebra,
which furnishes the induction step. 
\end{proof}

\begin{remark}
While the cohomology of $\pi_*\MSV(W)$ with respect to ${\rm Res}_{z=0}\alpha(z)$
is a well-defined sheaf of vertex algebras, the conformal structure is a priori not 
clear. In general, to define a conformal structure for the twisted chiral de Rham sheaf 
for a vector bundle $E$ one needs to choose an isomorphism between $\Lambda^{n-1}E$
and $\Lambda^{n-1}TX$ (up to constant multiple). Specifically, one needs to be 
sure that in the local coordinates the exterior product of $\hat \phi^j$ corresponds
to the exterior product of $d\hat b^j$ under the dual of the above isomorphism.
There is a natural choice of isomorphism here that works globally for $X$ as follows.
We can think of the restriction $\alpha\vert_X$ as a section of 
$W^\dual\otimes TY\vert_X^\dual$, or a map $TY\vert_X\to W\vert X$.
Then it defines a short exact sequence of
bundles on $X$
$$
0\to E\to TY\vert_X\to W\vert X\to 0.
$$
This provides a natural identification of  $\Lambda^{n-1}E$
and $\Lambda^{n-1}TX$. Locally this amounts to the multiplication 
by $P_n$. In the notations above $\hat\phi$ and $d\hat b$ are not compatible,
which accounts for the presence of the  extra term $\frac 12(\ln P_n)''$ in $\hat L$.
Consequently, for the globally defined conformal structure on the cohomology,
we need to use $\hat L$ and $\hat J$ that are defined for $P_n$ that's constant
on $X$, in which case the extra terms in Lemma \ref{hatLJ} do not appear.
\end{remark}

We are now ready to prove Theorem \ref{tricky}.
\begin{proof}
By Lemma \ref{sub} the cohomology is locally isomorphic to a free field vertex algebra.
The conformal weight zero and fermion one and weight one and fermion 
number $(-1)$  parts are naturally isomorphic to the sheaves 
of sections of $E^\dual$  and $E$ by Lemmas \ref{hatphi} and \ref{hatpsi} respectively.
The statement now follows from \cite{gerbes2}.
\end{proof}

\begin{remark}\label{beta}
Let us examine in more detail the field 
$$
\beta =b^{n+1}a_{n+1}+ \phi^{n+1}\psi_{n+1}
$$
featured prominently in Lemma \ref{hatLJ}.
The action of $\CC^*$ on $W$ canonically defines a vector field which 
in local coordinates looks like $\psi_{\CC^*}=b^{n+1}\psi_{n+1}$. Consider the OPE 
of the field 
$$
Q(z) = \sum_{i=1}^{n+1}a_{i}\phi^i 
$$
with $\psi_{\CC^*}$. 
We get
$$
Q(z)\beta(w) \sim (z-w)^{-2} + (z-w)^{-1}\beta(w).
$$
Consequently, $\beta$ is the image of $\psi_{\CC^*}$ under the 
map ${\rm Res}_{z=0}Q(z)$. While $Q$ itself depends on the choice 
of coordinates, see \cite[equation 4.1(c)]{MSV}, its residue does not.
Thus, $\beta$ is independent of the choice of the coordinate system.
\end{remark}

\begin{remark}
If in addition the bundle $W$ is the canonical line bundle on $Y$, then the total 
space of $W$ is a Calabi-Yau. It has a natural nondegenerate volume form
which is the derivative of the image in $\Lambda^{n}TW^\dual$
of the tautological section of $\pi^* \Lambda^{n}TY^\dual$.
Thus, the $J$ field on $W$ is well-defined
as is the field $\hat J$ on $X$. In fact, Lemma \ref{hatLJ} shows that $\hat J$
is the image in the cohomology of the field 
$$
J - \beta
$$
where $\beta$ is defined in the above remark.
The field $J-\beta$
descends naturally to $X$, which in this case is also a Calabi-Yau. The particular
case when $\alpha$ was a gradient of a global function, linear on 
fibers, was considered in \cite{Borvert}.
In this case, we get the usual (not twisted) chiral de Rham complex 
on $X$, with $N=2$ structure. 
\end{remark}

\begin{remark}\label{GAGAtricky}
We observe that Theorem \ref{tricky} holds in the algebraic setting.
Namely, if $Y$, $W$, $X$ and $\alpha$ are algebraic, then the statement holds
for sheaves of vertex algebras in Zariski topology. Indeed, the calculations of 
Lemmas \ref{hatb}-\ref{hatphi} are unchanged. We can pick rational functions $y_i$
and Zariski open subsets $U_X$, $U_Y$ and $U_W$ as before, so that they
generate the $m/m^2$ at all points in $U_W$. Then the partial derivatives of
rational functions make sense as rational functions and the calculations of 
Lemmas \ref{hatLJ} and \ref{sub} and Theorem \ref{tricky} go through as well.
\end{remark}

\begin{proposition}\label{anyaff}
For \emph{any affine} Zariski open subset $U_Y$ the cohomology 
of $\pi_*\MSV(W)$ on $U_Y$ by ${\rm Res}_{z=0}\alpha(z)$ 
is isomorphic to the sections of $\MSV(X,E)$ on $U_X=U_Y\cap X$.
\end{proposition}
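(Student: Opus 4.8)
The plan is to deduce the global statement directly from the sheaf-level computation of Theorem \ref{tricky}, the only genuine issue being that one must interchange the operation of taking sections over $U_Y$ with the operation of passing to cohomology of the differential ${\rm Res}_{z=0}\alpha(z)$. The cohomology \emph{sheaf} of $\pi_*\MSV(W)$ has already been identified; since it is supported on $X$ by Lemma \ref{hatb}, it is the pushforward $\iota_*\MSV(X,E)$ along the closed embedding $\iota:X\hookrightarrow Y$. So the whole proposition will follow once I know that $\Gamma(U_Y,-)$ commutes with this cohomology, and the mechanism for that is the affineness of $U_Y$.

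First I would record the relevant algebraic structure. The endomorphism ${\rm Res}_{z=0}\alpha(z)$ has conformal weight $(-1)$ and fermion number $1$, so it raises fermion number by one while preserving the sum $H$ of conformal weight and fermion number (and, since $\alpha$ is $\CC^*$-homogeneous of weight one, it raises the $\CC^*$-weight by one as well). Thus $(\pi_*\MSV(W),{\rm Res}_{z=0}\alpha(z))$ is, after fixing $H$ and the $\CC^*$-weight, a bounded complex graded by fermion number, the boundedness coming from the facts that conformal weight is nonnegative and that the fermionic generators anticommute. Each term of this complex is quasi-coherent on $Y$: in a fixed conformal weight and fermion number $\MSV(W)$ is a locally free $\mathcal{O}_W$-module of finite rank, and $\pi$ being an affine morphism, its pushforward is quasi-coherent on $Y$; decomposing by the $\CC^*$-weight (equivalently by the fiber degree of $\pi_*\mathcal{O}_W=\bigoplus_{k\ge 0}(W^\dual)^{\otimes k}$) even gives coherent locally free pieces. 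The differential is then a morphism of quasi-coherent sheaves in each graded degree.

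The key input is now purely sheaf-theoretic. Because $U_Y$ is affine, Serre's vanishing theorem makes the functor $\Gamma(U_Y,-)$ exact on quasi-coherent sheaves, so it commutes with the formation of kernels, images, and hence cohomology of the complex above. Applying this in each graded piece and reassembling (the global sections functor commutes with the direct sums in question on the quasi-compact scheme $U_Y$) yields
$$
H^\bullet\big(\Gamma(U_Y,\pi_*\MSV(W)),\,{\rm Res}_{z=0}\alpha(z)\big)
\;\cong\; \Gamma\big(U_Y,\,H^\bullet(\pi_*\MSV(W))\big).
$$
By Theorem \ref{tricky} the right-hand side equals $\Gamma(U_Y,\iota_*\MSV(X,E))$, which by the definition of the pushforward is exactly $\Gamma(U_X,\MSV(X,E))$ with $U_X=U_Y\cap X$, and this is the desired isomorphism of vertex algebras (the vertex algebra structure being inherited from that on the cohomology sheaf).

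I expect the only real obstacle to be the careful bookkeeping of the grading together with the verification that every graded piece is genuinely quasi-coherent and that ${\rm Res}_{z=0}\alpha(z)$ is a morphism of such sheaves; once this is in place, the affineness of $U_Y$ makes the interchange of sections and cohomology automatic via Serre vanishing, and Theorem \ref{tricky} supplies the identification of the cohomology sheaf with no further work.
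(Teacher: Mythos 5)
Your overall strategy---use affineness of $U_Y$ to commute $\Gamma(U_Y,-)$ with the cohomology of ${\rm Res}_{z=0}\alpha(z)$ and then quote Theorem \ref{tricky}---is the same one the paper follows, but the technical step on which your argument rests is false as stated. You claim that in fixed conformal weight and fermion number $\MSV(W)$ is a locally free $\mathcal{O}_W$-module of finite rank, so that its pushforward is quasi-coherent on $Y$ and Serre vanishing makes $\Gamma(U_Y,-)$ exact. The chiral de Rham complex is \emph{not} a sheaf of $\mathcal{O}$-modules: multiplication by functions is the normally ordered product, which fails to be associative on fields involving the $a_i$, and the $a_i$ themselves transform under coordinate changes with inhomogeneous correction terms (this is precisely the coordinate dependence of the field $Q$ noted in Remark \ref{beta}, cf. \cite[eq. 4.1(c)]{MSV}). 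So the fixed-weight pieces are not quasi-coherent---they are not even $\mathcal{O}$-modules. What is true, and what the paper's proof actually invokes, is the weaker property that these sheaves are \emph{filtered with quasi-coherent quotients}; vanishing of higher cohomology on affine sets then follows by induction along the filtration, not by a direct appeal to Serre vanishing.

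Once quasi-coherence is weakened to this filtration property, your exactness step also has to be reorganized: $\Gamma(U_Y,-)$ is not exact on sheaves of vector spaces, and the kernel and image sheaves of the differential are not obviously filtered with quasi-coherent quotients, so one cannot simply say that sections commute with kernels and images. The paper instead covers $U_Y$ by the small open sets of Lemma \ref{sub} on which the statement is already known (a covering argument you also need, since Theorem \ref{tricky} identifies cohomology \emph{sheaves}, i.e. is local, and does not by itself say anything about sections over a large open set), notes that the statement holds on intersections by localization, and then compares the \v{C}ech complexes of $\pi_*\MSV(W)$ and $\MSV(X,E)$ for this cover: both have no higher \v{C}ech cohomology by the filtration property, and the snake lemma (equivalently, degeneration of the two hypercohomology spectral sequences) identifies the $D$-cohomology of sections over $U_Y$ with the sections of $\MSV(X,E)$ over $U_X$. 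Your proposal can be repaired along exactly these lines, but as written the quasi-coherence claim is a genuine gap.
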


\begin{proof}
We can cover $U_Y$ by smaller subsets on which 
the statement holds. Then the statement  holds on their intersections
by localization. The \v{C}ech complexes for $\pi_*\MSV(W)$ and 
$\MSV(X,E)$ for this cover of $U_Y$ have no higher cohomology, because 
these sheaves are filtered with quasi-coherent quotients. Then 
the snake lemma finishes the proof.
\end{proof}

\begin{remark}
It appears plausible that one can replace the line bundle $W$ by a vector bundle and
apply the calculations of this section to subvarieties $X\subseteq Y$ which are defined by sections of a vector bundle. In particular, the approach should work for complete intersections of hypersurfaces.
\end{remark}

\begin{remark}
It would be interesting to study to what extent one can use this approach to \emph{define}
the (twisted) chiral de Rham sheaf for hypersurfaces with some mild singularities.
\end{remark}

\section{Deformations of the cohomology of twisted chiral de Rham sheaf and CY/LG correspondence}\label{CYLG}

In this section we want to show that the vertex algebras $V_{(F^\cdot),g}$ 
of Definition \ref{hetd} are 
in some sense deformations of the cohomology of a twisted chiral de Rham sheaf
constructed in \cite{gerbes1,gerbes2,gerbes3} and further studied in \cite{Tan}. Specifically, we will show that the cohomology of the chiral de Rham sheaf
for the vector bundle on the quintic considered in \cite{Witten} is equal to the cohomology 
of ${\rm Fock}_{M\oplus K^\dual}^\Sigma$ by the operator $D_{(F^\cdot),g}$ 
defined in Section \ref{sechet}. Our method also shows how one can produce 
more examples of calculations of cohomology of twisted chiral de Rham sheaf on 
hypersurfaces and complete intersections.

\smallskip
Let $x_i,0\leq i\leq 4$ be homogeneous coordinates in $\PP^4$.
Let $F^i=x_iR^i, 0\leq i\leq 4$ be homogeneous polynomials of degree $5$
as in Section \ref{sechet}.  Consider the lattice vertex algebra ${\rm Fock}_{M\oplus K^\dual}^\Sigma$ and the operator
$$
D_{(F^\cdot),g}={\rm Res}_{z=0}\Big(\sum_{\stackrel{m\in \Delta}{0\leq i\leq 4}}F^i_m m_i^{ferm}(z)\ee^{\int m^{bos}(z)}
+\sum_{n\in\Delta^\dual} g_n n^{ferm}(z)\ee^{\int  n^{bos}(z)}\Big)
$$ 
from Section \ref{sechet}. 
\begin{theorem}\label{5.1}
The cohomology of  ${\rm Fock}_{M\oplus K^\dual}^\Sigma$ with 
respect to $D_{(F^\cdot),g}$ is isomorphic to the cohomology of a twisted chiral de Rham 
sheaf on the quintic $\sum_{i=0}^4F^i=0$ given by $R^i$.
\end{theorem}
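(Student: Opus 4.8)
The plan is to realize $D_{(F^\cdot),g}$ as the total differential of a bicomplex and to compute its cohomology by an iterated-cohomology (spectral sequence) argument, taking the $N$-part first and the $M$-part second. Write $D_{(F^\cdot),g}=D_F+D_g$, where $D_g$ is the $\Delta^\dual$ (i.e.\ $N$) part and $D_F$ is the $\Delta$ (i.e.\ $M$) part of the differential. First I would record that this is genuinely a bicomplex, graded by the $M$- and $N$-momenta of states (valued in the cones $K$ and $K^\dual$, hence bounded below): $D_g$ shifts the $N$-momentum and $D_F$ the $M$-momentum. The two pieces anticommute, which is precisely the nonsingularity of the mixed OPEs verified in Proposition \ref{diff}, while $D_g^2=0$ and $D_F^2=0$ because two vertex operators whose exponents lie purely in $N$ (respectively purely in $M$) have regular OPEs with one another, the split pairing on $M\oplus N$ vanishing on each summand. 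The conformal-weight grading makes each bigraded piece finite-dimensional, so the spectral sequence converges.

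Next I would take $D_g$-cohomology. Here I invoke the result of \cite{Borvert} quoted above: the cohomology of ${\rm Fock}_{M\oplus K^\dual}^\Sigma$ with respect to $D_g$ computes the chiral de Rham complex $\MSV(W)$ of the canonical bundle $W\to\PP^4$, and the $\Sigma$-truncation realizes this through a toric \v{C}ech-type resolution over the affine charts of $\PP^4$. The first page of the spectral sequence is therefore a \v{C}ech model for $\MSV(W)$. The crucial identification is that the induced differential $\bar D_F$ on this page is exactly ${\rm Res}_{z=0}\alpha(z)$ for a homogeneous holomorphic one-form $\alpha$ on $W$. I would pin $\alpha$ down through the \cite{Borvert} dictionary, under which the lattice points $m\in\Delta$ correspond to degree-$5$ monomials $x^m$ and the field $m_i^{ferm}\ee^{\int m^{bos}}$ corresponds to the $i$-th coordinate fermion weighted by $x^m$. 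Summing against $F^i_m$ produces, in the local form \eqref{alpha}, a one-form with $P=\sum_iF^i$ and fiber coefficients matching $R^i$ (recall $F^i=x_iR^i$). In particular $X=\{P=0\}=\{\sum_iF^i=0\}$ is the quintic, and the kernel bundle $E=\ker\bigl(T\PP^4\vert_X\xrightarrow{(R^i)}\Oa(5)\vert_X\bigr)$ is precisely the deformation of $TX$ determined by $R^i$.

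Finally, I would feed this into Theorem \ref{tricky}, or rather its sheafy globalization Proposition \ref{anyaff}: the cohomology of $\pi_*\MSV(W)$ with respect to ${\rm Res}_{z=0}\alpha(z)$ is the twisted chiral de Rham sheaf $\MSV(X,E)$. Applying this chart by chart collapses the second page to a \v{C}ech complex with coefficients in $\MSV(X,E)$ on $X$, whose cohomology is the stated cohomology of the twisted chiral de Rham sheaf on the quintic. The degeneration and the vanishing of the higher \v{C}ech cohomology are guaranteed by the same filtration-with-quasi-coherent-quotients argument used in the proof of Proposition \ref{anyaff}.

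The main obstacle I anticipate is the second step: the commutation of the two cohomologies and the precise matching of a sheaf-level statement (Theorem \ref{tricky}, Proposition \ref{anyaff}) with the global-cohomology statement of \cite{Borvert}. Concretely, one must check that taking $D_g$-cohomology genuinely produces the \emph{local} operator ${\rm Res}_{z=0}\alpha(z)$ on each toric chart, rather than merely after passing to global cohomology, and that the bigrading controls convergence so that iterated cohomology equals total cohomology. Verifying the identification $\bar D_F={\rm Res}_{z=0}\alpha(z)$, including the bookkeeping of homogeneity degrees and the role of the fiber coordinate of $W$, is the computational heart of the argument; everything downstream is a formal consequence of Theorem \ref{tricky}.
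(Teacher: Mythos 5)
Your proposal is correct and follows essentially the same route as the paper's own proof: decompose $D_{(F^\cdot),g}$ into its $M$- and $N$-parts, use the chart-by-chart identification from \cite{Borvert} of the $D_g$-cohomology with sections of $\MSV(W)$ on the canonical bundle, identify the induced differential with ${\rm Res}_{z=0}\alpha(z)$ for the global one-form $\alpha$ built from the $F^i=x_iR^i$, apply Proposition \ref{anyaff} to obtain $\MSV(X,E)$ chart by chart, and conclude by spectral-sequence degeneration and toric \v{C}ech cohomology. The steps you flag as delicate are precisely the ones the paper disposes of by citing \cite[Proposition 7.11]{Borvert} and \cite[Theorem 7.14]{Borvert}, together with an explicit formula for $\alpha$ in the affine charts.
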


\begin{proof}
Consider the canonical bundle $\pi:W\to \PP^4$.
Over the chart $x_j\neq 0$ on $\PP^n$ the coordinates on $W$ are 
$\frac {x_i}{x_j},i\neq j$ and $s_j$. The coordinate changes 
are  $s_k=s_j\Big(\frac {x_k}{x_j}\Big)^5$.
The data $(F^i=x_iR^i)$ give rise to a  $1$-form in an affine chart $x_k\neq 0$
defined as 
$$
\alpha_k = x_k^{-4}s_k\sum_{i=0}^4 R^i(x) d\Big(\frac {x_i}{x_k}\Big)   
+ \frac 15x_k^{-5}\sum_{i=0}^4 x_iR^i ds_k.
$$
It is easily checked that these forms glue together to a global $1$-form $\alpha$
on $W$ which is of weight one with respect to the $\CC^*$ action on the fibers.
The vector bundle $E$ on $X=\{\sum_i F^i=0\}$ constructed 
from this form in Section \ref{seccoh} is isomorphic to the bundle considered in
\cite{Witten}.

\smallskip
All further arguments are essentially identical to those of \cite{Borvert}. 
One considers the cover of $\PP^4$ and its canonical bundle $W$ 
by toric affine charts. The cone $K^\dual$ is subdivided by a fan $\Sigma$.
The cones of this fan correspond to toric charts on $W$.
It was already seen in \cite{Borvert}
that for a chart that corresponds to a face $\sigma$
of $K^\dual$, the sections of the chiral de Rham complex on $W$ 
correspond to the cohomology of ${\rm Fock}_{M\oplus\sigma}$ with respect
to
$$
D_g= {\rm Res}_{z=0}\sum_{n\in \Delta^\dual\cap \sigma} g_n n^{ferm}(z)\ee^{\int n^{bos}(z)}.
$$
By Proposition  \ref{anyaff}
the cohomology of a twisted chiral de Rham sheaf $\MSV(X,E)$
of $X=\{\sum_i x_iR^i=0\}$ is isomorphic to the cohomology of 
${\rm Fock}_{M\oplus\sigma}/D_g$ by the ${\rm Res}_{z=0}\alpha(z)$.
It is a routine calculation to check that this corresponds precisely to the 
cohomology via 
$${\rm Res}_{z=0}\sum_{i=0}^4 
\sum_{m\in \Delta}F^i_m m_i^{ferm}(z)\ee^{\int m^{bos}(z)}.$$
The spectral sequence for the cohomology of the sum
degenerates, as in \cite[Proposition 7.11]{Borvert}. This shows that the sections
of the twisted chiral de Rham sheaf over the open chart are isomorphic to 
the cohomology of 
${\rm Fock}_{M\oplus\sigma}$ by $D_{(F^\cdot),g}$.

\smallskip
Toric \v{C}ech cohomology as in \cite[Theorem 7.14]{Borvert} finishes the proof.
\end{proof}

\begin{remark}
We observe that the fields $J$ and $L$ defined in Proposition \ref{3.5}
correspond precisely to the fields $J$ and $L$ of the twisted chiral de Rham complex.
This is simply 
a matter of going through the calculations. The field $\beta$ of Remark \ref{beta}
turns out to be $(\deg^\dual)^{bos}$. The $\deg^{bos}$ part in Proposition
\ref{3.5} comes from the 
description of chiral de Rham complex in logarithmic coordinates, see \cite[Proposition 6.4]{Borvert}.
\end{remark}

We are now ready to remark on Calabi-Yau/Landau-Ginzburg correspondence
for (0,2) theories. Consider the vertex algebras which are the cohomology
of ${\rm Fock}_{M\oplus K^\dual}$ by $D_{(F^\cdot),g}$ as $(F^\cdot)$ is fixed
and $g$ varies. If we fix $g_n$ for $n\neq \deg^\dual$ and let $g_{\deg^\dual}$
go to $\infty$, then in the limit the action of $D_{(F^\cdot),g}$ starts to
resemble its action on  ${\rm Fock}_{M\oplus K^\dual}^\Sigma$, after an appropriate
reparametrization. This is the Calabi-Yau limit of the theory. The Landau-Ginzburg 
limit occurs for $g_{\deg^\dual}=0$, as in the $N=2$ case.

\begin{remark}
We do not know whether passing from the cohomology of 
${\rm Fock}_{M\oplus K^\dual}^\Sigma$ by $D_{(F^\cdot),g}$
to the cohomology of 
${\rm Fock}_{M\oplus K^\dual}$ by $D_{(F^\cdot),g}$ does not
change the dimension of the graded pieces of the cohomology.
From the physical point of view it is conceivable that instanton corrections
result in some reduction of the dimension of the state space of the 
half-twisted (0,2) theory.
\end{remark}

\section{Chiral rings}\label{chiral}
In this section we discuss the consequences of the machinery of 
\cite{Borvert,chiralrings} as it applies to the algebras $V_{(F^\cdot),g}$.

\smallskip
First, we observe that we can replace the cone $K^\dual$ by the whole
lattice $N$.
\begin{proposition}
The algebra $V_{(F^\cdot),g}$ can be alternatively described as the 
cohomology of ${\rm Fock}_{M\oplus N}$ or ${\rm Fock}_{K\oplus N}$ 
by $D_{(F^\cdot),g}$.
\end{proposition}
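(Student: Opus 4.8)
The plan is to show that enlarging the $N$-part of the Fock space from $K^\dual$ to all of $N$ (and symmetrically enlarging the $M$-part from $\Delta$-indexed operators to $K$) does not change the $D_{(F^\cdot),g}$-cohomology. The essential point is that the claim for the $N=2$ vertex algebras of mirror symmetry, i.e.\ for the differential $D_{f,g}$, was already established in \cite{Borvert} and \cite{chiralrings}, so the task here is to check that the extra $M$-part of $D_{(F^\cdot),g}$ does not interfere with that argument. First I would recall the relevant statement from \cite{Borvert}: the cohomology of ${\rm Fock}_{M\oplus K^\dual}$ by the purely $N$-directed piece $D_g={\rm Res}_{z=0}\sum_{n\in\Delta^\dual} g_n n^{ferm}(z)\ee^{\int n^{bos}(z)}$ agrees with that of ${\rm Fock}_{M\oplus N}$ by $D_g$, because the lattice points of $N$ outside $K^\dual$ contribute an acyclic complex.

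The main step is to package both differentials through a spectral sequence, exactly as in the proof of Theorem \ref{5.1} and in \cite[Proposition 7.11]{Borvert}. I would write $D_{(F^\cdot),g}=D_M+D_N$ where $D_M$ is the sum over $m\in\Delta$, $0\leq i\leq 4$ of the $F^i_m m_i^{ferm}(z)\ee^{\int m^{bos}(z)}$ terms and $D_N$ is the sum over $n\in\Delta^\dual$. One filters by the $M$-grading (or equivalently by the weight of the $\deg^\dual$ current), so that the first page of the spectral sequence computes $D_N$-cohomology and the next differential is induced by $D_M$. Since $D_N$ acts only in the $N$-directions, its cohomology on ${\rm Fock}_{M\oplus K^\dual}$ and on ${\rm Fock}_{M\oplus N}$ coincide by the cited result from \cite{Borvert}; the $M$-part of the Fock space is carried along unchanged. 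The induced differential $D_M$ then acts identically in both cases, so the $E_2$-pages — and hence the abutments — are isomorphic. The symmetric statement for ${\rm Fock}_{K\oplus N}$ follows by running the same filtration with the roles of $M$ and $N$ interchanged, using the dual acyclicity of the $K$-versus-$\Delta$ enlargement.

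The step I expect to be the main obstacle is verifying that the two filtrations are compatible with the enlargement \emph{simultaneously} in both lattice directions, and in particular that the spectral sequences converge and degenerate in the enlarged setting just as they do over $K^\dual$ and $\Delta$. The delicate point is that $D_{(F^\cdot),g}$ is genuinely a sum of two non-commuting pieces, so one must confirm that the acyclicity arguments for the extra lattice points are insensitive to the presence of $D_M$ — i.e.\ that the extra summands in the enlarged Fock space still form a $D_N$-acyclic (respectively $D_M$-acyclic) subquotient after passing to the associated graded. I would handle this by noting, as in \cite[Proposition 7.11]{Borvert}, that the relevant gradings make the offending part of the complex into a tensor product with a contractible Koszul-type factor, so the contraction persists degreewise and the spectral sequence collapse goes through verbatim. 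Once this compatibility is in place, the isomorphism of cohomologies is immediate.
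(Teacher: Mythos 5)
There is a genuine gap, and it sits at the very first step. The statement you attribute to \cite{Borvert} --- that the $D_N$-cohomology of ${\rm Fock}_{M\oplus K^\dual}$ agrees with that of ${\rm Fock}_{M\oplus N}$ because the lattice points of $N$ outside $K^\dual$ form a $D_N$-acyclic piece --- is not a result of \cite{Borvert} and is in fact false. At the associated graded level, the $D_N$-homology of the quotient region $\{(m,n):n\in N\setminus K^\dual\}$ is the Koszul homology of the six monomials $[n]$, $n\in\Delta^\dual$, with coefficients in the $\CC[K^\dual]$-module $\CC[N]/\CC[K^\dual]$; since these monomials are units in the Laurent ring $\CC[N]$, the long exact sequence identifies this with a shift of the Koszul homology of $\CC[K^\dual]$ itself, which is nonzero (already $H_0=\CC[K^\dual]/(\Delta^\dual\hbox{-monomials})\neq 0$). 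The same failure is visible geometrically in the $\Sigma$-deformed setting: the $D_g$-cohomology of ${\rm Fock}^\Sigma_{M\oplus K^\dual}$ is the cohomology of $\MSV(W)$ for the noncompact total space $W$, while enlarging $K^\dual$ to $N$ replaces $W$ by the compact toric variety of the complete fan $\Sigma$, and these cohomologies differ. So the $E_1$-pages of your two spectral sequences do not agree, and no convergence argument can repair that. Your treatment of ${\rm Fock}_{K\oplus N}$ fails symmetrically: the region $\{m\in M\setminus K\}$ is not $D_M$-acyclic --- its Koszul obstruction is precisely the nonzero Jacobian-type ring $\CC[K]/(F^0,\ldots,F^4)$.

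The correct mechanism, which is what the paper's appeal to the Key Lemma of \cite{Borvert} uses, is \emph{crossed} relative to your proposal: the enlargement in the $N$-direction is trivialized by the $M$-part of the differential, and the restriction in the $M$-direction is trivialized by the $N$-part. On the quotient $\{n\notin K^\dual\}$ the $M$-charges run over all of $M$, and the leading term of $D_M$ is the Koszul differential of the five elements $F^i\in\CC[K]$ acting with $\CC[M]$-coefficients; because for transversal (generic) data the $F^i=x_iR^i$ have no common zeros on the torus, they generate the unit ideal in $\CC[M]$ and this Koszul complex is exact. This is exactly where the hypothesis on $(F^\cdot)$ enters, and it is the one point the paper flags as requiring modification of \cite{Borvert}: the Koszul complex for $\CC[K]$ and the logarithmic derivatives $x_i\partial_iF$ is replaced by the Koszul complex for $\CC[K]$ and the $F^i$. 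Dually, the quotient $\{m\notin K\}$ is killed by $D_N$ because there $n$ runs over all of $N$, where the monomials $[n]$, $n\in\Delta^\dual$, are invertible. Your proposal inverts both of these roles and, relatedly, never invokes the nondegeneracy of the $F^i$ at all, even though without it the proposition is false.
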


\begin{proof}
This statement for the usual algebras $V_{f,g}$ is called the Key Lemma
in \cite{Borvert} because of its importance to mirror symmetry.
The argument is unchanged after one replaces the Koszul complex
for $\CC[K]$ and logarithmic derivatives of $F$ by the Koszul complex
for $\CC[K]$ and $F_i$.
\end{proof}

As in the $N=2$ case we define operators $H_A$ and $H_B$ by 
$$
H_A={\rm Res}_{z=0} zL(z),~~ H_B={\rm Res}_{z=0}(zL(z)+J(z)).
$$
We then define the chiral rings of the theory as the parts of the 
vertex algebra where $H_A=0$ or $H_B=0$. The calculations
of the paper \cite{chiralrings} apply directly to this more general setting.
Consider the commutative ring $\CC[K \oplus K^\dual]$. Consider 
the quotient $\CC[(K \oplus K^\dual)_0]$ by the ideal spanned with 
monomials with positive pairing. Consider the endomorphism
$d_{(F^\cdot),g}$ on $\CC[(K \oplus K^\dual)_0]\otimes \Lambda^*M_\CC$ 
defined by 
\begin{equation}\label{chiraldiff}
\sum_{i=0}^4 \sum_{m\in \Delta}F^i_m [m]\otimes (\wedge m_i)
+\sum_{n\in\Delta^\dual} g_n [n]\otimes ({\rm contr.} n).
\end{equation}
It is a differential by a calculation similar to Proposition \ref{diff}.

\begin{theorem}
For generic $F^\cdot$ and $g$ the eigenvalues of $H_A$  and $H_B$
on $V_{(F^\cdot),g}$ are nonnegative integers.
the $H_A=0$ part is given as the cohomology of the 
corresponding eigenspace of ${\rm Fock}_{K\oplus K^\dual -\deg^\dual}$.
As a vector space, this is isomorphic to the cohomology
of $\CC[(K\oplus K^\dual)_0]\otimes \Lambda^*M_\CC$ by 
$d_{(F^\cdot),g}$ from \eqref{chiraldiff}.
The $H_B=0$ part comes from the corresponding eigenspace of 
${\rm Fock}_{K-\deg \oplus K^\dual}$.
As a vector space it is isomorphic to the cohomology of 
$\CC[(K\oplus K^\dual)_0]\otimes \Lambda^*N_\CC$ by an operator
similar to \eqref{chiraldiff} where one replaces all wedge products 
by contractions and vice versa.
\end{theorem}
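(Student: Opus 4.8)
The plan is to reduce the computation of the $H_A=0$ and $H_B=0$ parts of $V_{(F^\cdot),g}$ to a finite-dimensional calculation, exactly mirroring the strategy of \cite{chiralrings} for the $N=2$ case, with the substitution of the logarithmic derivatives $x_i\partial_i f$ by the polynomials $F^i=x_iR^i$. First I would analyze the grading: since $L$ and $J$ descend to $V_{(F^\cdot),g}$ by Proposition \ref{3.5}, the operators $H_A$ and $H_B$ are well-defined on the cohomology. I would show that $H_A$ and $H_B$ each arise as residues of the conformal and $U(1)$ fields, and that their eigenvalues are controlled by the lattice gradings coming from $K$ and $K^\dual$. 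For generic $F^\cdot$ and $g$ the relevant eigenspaces are finite-dimensional, and the nonnegativity of eigenvalues follows from the positivity of the pairings on $K\oplus K^\dual$, just as in \cite{chiralrings}.

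Next I would identify the $H_A=0$ subspace with the cohomology of a specific eigenspace of ${\rm Fock}_{K\oplus K^\dual-\deg^\dual}$. The key reduction is that imposing $H_A=0$ forces the relevant states to be built from the zero-mode (conformal weight minimal) part of the Fock space, so that the infinite tower of oscillators collapses and what remains is a purely combinatorial object: the semigroup ring $\CC[K\oplus K^\dual]$ together with the fermionic exterior algebra $\Lambda^*M_\CC$. The differential $D_{(F^\cdot),g}$ acts on this reduced complex precisely as the operator $d_{(F^\cdot),g}$ of \eqref{chiraldiff}: the vertex operators $\ee^{\int m^{bos}(z)}$ and $\ee^{\int n^{bos}(z)}$ become the monomials $[m]$ and $[n]$, the fermionic pieces $m_i^{ferm}$ become wedge products $\wedge m_i$, and $n^{ferm}$ becomes contraction by $n$. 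The passage to the quotient $\CC[(K\oplus K^\dual)_0]$ by monomials of positive pairing reflects the vanishing of states with positive conformal weight after the $H_A=0$ constraint is imposed.

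For the $H_B=0$ part I would run the same argument with the roles of $M$ and $N$ interchanged: the shift is now by $\deg$ rather than $\deg^\dual$, the Fock space is ${\rm Fock}_{K-\deg\oplus K^\dual}$, and the finite-dimensional model uses $\Lambda^*N_\CC$ with wedges and contractions swapped. The $N=2$ involution that exchanges the $A$- and $B$-rings in \cite{chiralrings} is broken here because $D_{(F^\cdot),g}$ does not super-commute with the full $N=2$ structure, but the surviving $L$ and $J$ from Proposition \ref{3.5} are exactly what is needed to define $H_A$ and $H_B$ separately, so each ring can be extracted independently.

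The hard part will be justifying rigorously that the reduction to the finite-dimensional complex $\CC[(K\oplus K^\dual)_0]\otimes\Lambda^*M_\CC$ is an isomorphism on cohomology rather than merely on the associated graded. In \cite{chiralrings} this rests on a spectral-sequence degeneration argument; the main obstacle is to verify that the analogous spectral sequence degenerates in the $(0,2)$ setting, where the differential no longer comes from logarithmic derivatives of a single potential $f$. I would handle this by checking that the genericity of $(F^\cdot)$ and $g$ guarantees the Koszul-type complexes built from the $F^i$ are acyclic in the appropriate range — the same acyclicity that underlies Lemmas \ref{hatb}--\ref{hatphi} — so that the higher differentials in the spectral sequence vanish and degeneration holds, exactly as in \cite[Proposition 7.11]{Borvert}.
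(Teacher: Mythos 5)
Your proposal is correct and follows essentially the same route as the paper, whose entire proof is the single line ``One follows the argument of \cite{chiralrings}'': you adapt the chiral-ring machinery of \cite{chiralrings} with the logarithmic derivatives $x_i\partial_i f$ replaced by the $F^i$, using Proposition \ref{3.5} to make $H_A$ and $H_B$ well defined and genericity to secure the Koszul-type acyclicity and spectral-sequence degeneration as in \cite[Proposition 7.11]{Borvert}. Your write-up simply makes explicit the steps the paper leaves to the cited reference.
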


\begin{proof}
One follows the argument of \cite{chiralrings}.
\end{proof}

\begin{remark}
It would be interesting to compare this description of chiral rings to
other known statements about the (0,2) theories, see for example \cite{Guffin}.
It also appears that the work of \cite{PlesserMelnikov} is closely related to 
this paper.
\end{remark}

\begin{remark}
It is possible, in the quintic case, to completely calculate the products
in the chiral rings. However, this will be in the set of coordinates that
is somewhat different from the usual K\"ahler parameters.
We plan to return to this topic in future research. 
\end{remark}

\section{Concluding comments}\label{last}
The main philosophical outcome of this paper is a simple observation that
(0,2) string theory in toric setting (at the level of half-twisted theory) is
quite amenable to explicit calculations. The quintic case is however somewhat
special, because one is dealing with a smooth ambient variety.

\smallskip
The most general possible toric framework to which one can hope to extend 
this setup should also combine the almost dual Gorenstein cones explored in
\cite{BB-BH}. From this perspective the most generic ansatz that we wish
to make is the following.

\smallskip
Consider dual lattices $M$ and $N$ with elements $\deg\in M$ and $\deg^\dual\in N$.
Consider \emph{subsets} $\Delta$ and $\Delta^\dual$ in $M$ and $N$ respectively
with the properties 
$$
\Delta\cdot\deg^\dual = \deg \cdot \Delta^\dual=1, ~~\Delta\cdot\Delta^\dual\geq 0.$$
In addition, the cones generated by $\Delta$ and $\Delta^\dual$ should be almost
dual to each other, in some sense. 
It is possible that the technical definition of \cite{BB-BH} would 
still be appropriate, but since it might not be, we feel that it may not be wise to
present it here. 

\smallskip
Consider the lattice vertex algebra ${\rm Fock}_{M\oplus N}$.
Pick a basis $m_i$ of $M$ and $n_i$ of $N$.
Then one needs to consider collections of complex numbers
$F^i_m$ and $G^i_n$ for all $i$, $m\in\Delta$, $n\in \Delta^\dual$ such that 
the operator
$$
D_{(F^\cdot), (G^\cdot)}=
{\rm Res}_{z=0}\Big(\sum_{i} \sum_{m\in \Delta}F^i_m m_i^{ferm}(z)\ee^{\int m^{bos}(z)}
$$
$$+\sum_{i}\sum_{n\in\Delta^\dual} G^i_n n_i^{ferm}(z)\ee^{\int n^{bos}(z)}\Big)
$$
is a differential on ${\rm Fock}_{M\oplus N}$ (and in fact we want the OPE
of the above field with itself to be nonsingular). 

\smallskip
Then we would like to consider the cohomology of ${\rm Fock}_{M\oplus N}$
by the above differential. The hope is that under some almost duality condition 
the Key Lemma of \cite{Borvert} still works and we can then show that this
cohomology satisfies $H_A,H_B\geq 0$. 

\smallskip
It is not clear what, if any, geometric meaning one would be able to ascribe to
a generic family of algebras obtained in this fashion, but they appear to be very natural constructs to
study. In this context the (0,2) mirror symmetry would simply correspond to a 
switch between $M$ and $N$.

\end{document}